\newcommand{\rdpnote}[1]{\todo[color=blue!10,linecolor=black,size= \tiny]{(rdp)-#1}}
\newtheorem{sat}{Theorem}[section]		
\newtheorem{lem}[sat]{Lemma}
\newtheorem{prop}[sat]{Proposition}
\newtheorem*{defi*}{Definition}			
\newtheorem*{bei*}{Example}
\newtheorem*{sat*}{Theorem}				
\newtheorem*{kor*}{Corollary}
\newtheorem*{rmk*}{Remark}				
\newtheorem*{quest*}{Question}
\let\ssection=\section
\renewcommand{\section}{\setcounter{equation}{0}\ssection}
\newtheorem*{namedtheorem}{\theoremname}
\newcommand{\theoremname}{testing}
\newenvironment{named}[1]{\renewcommand{\theoremname}{#1}\begin{namedtheorem}}{\end{namedtheorem}}
\theoremstyle{remark}
\newtheorem*{bem}{Remark}
\newtheorem{bei}{Example}
\newtheorem*{namedtheoremr}{\theoremnamer}
\newcommand{\theoremnamer}{testing}
\newcommand{\BC}{\mathbb C}			
			\newcommand{\BH}{\mathbb H}
			\newcommand{\BR}{\mathbb R}
			\newcommand{\BZ}{\mathbb Z}
			\newcommand{\CH}{\mathcal H}
\newcommand{\CI}{\mathcal I}			
\newcommand{\CM}{\mathcal M}		\newcommand{\CN}{\mathcal N}
\newcommand{\CS}{\mathcal S}			\newcommand{\CT}{\mathcal T}
\newcommand{\actson}{\curvearrowright}
\newcommand{\D}{\partial}
\newcommand{\DD}{\nabla}
\DeclareMathOperator{\Hom}{Hom}		
\DeclareMathOperator{\vol}{vol}		
\DeclareMathOperator{\tr}{Tr}
\DeclareMathOperator{\Map}{PMap}
\DeclareMathOperator{\MMap}{Map}
\DeclareMathOperator{\diam}{diam}
\newcommand{\comment}[1]{}
\DeclareMathOperator{\Stab}{Stab}
\DeclareMathOperator{\Homeo}{Homeo}
\DeclareMathOperator{\Aut}{Aut}
\DeclareMathOperator{\Bil}{Bil}
\DeclareMathOperator{\Tr}{Tr}
\DeclareMathOperator{\grad}{grad}
\DeclareMathOperator{\WP}{WP}
\DeclareMathOperator{\dist}{dist}
\newcommand{\fsubd}{\mathrel{{\scriptstyle\searrow}\kern-1ex^d\kern0.5ex}}
\newcommand{\bsubd}{\mathrel{{\scriptstyle\swarrow}\kern-1.6ex^d\kern0.8ex}}
\renewcommand{\epsilon}{\varepsilon}
\renewcommand{\le}{\leqslant}
\renewcommand{\ge}{\geqslant}
\renewcommand{\emptyset}{\varnothing}
\begin{document}

\title[]{Holomorphic maps between moduli spaces II}
\dedicatory{This paper is dedicated to Gabino Gonz\'alez-Diez.}
\author{Rodrigo De Pool}
\address{Instituto de Ciencias Matem\'aticas (ICMAT), Madrid, Spain}
\email{rodrigo.depool@icmat.es}
\author{Juan Souto}
\address{UNIV RENNES, CNRS, IRMAR - UMR 6625, F-35000 RENNES, FRANCE}
\email{jsoutoc@gmail.com}

\begin{abstract}
We prove that forgetful maps are the only non-constant holomorphic maps $\CM_{g,r}\to\CM_{g',r'}$ between moduli spaces, as long as $g\ge 4$ and $g'\le 3\cdot 2^{g-3}$.
\end{abstract}
\maketitle

\section{Introduction}

The goal of this note is to study holomorphic maps between moduli spaces. Denote by $\CT_{g,r}$ the Teichm\"uller space of Riemann surfaces of genus $g$ with $r$ marked points and by $\Map_{g,r}$ the pure mapping class group, that is, the group of isotopy classes of orientation preserving diffeomorphisms fixing each marked point. The action $\Map_{g,r}\actson\CT_{g,r}$ preserves the standard complex structure of Teichm\"uller space \cite{Nag}. We always think of moduli space as the complex orbifold
$$\CM_{g,r}=\CT_{g,r}/\Map_{g,r},$$
meaning that maps $F:M\to\CM_{g',r'}$ from a manifold, or more generally an orbifold, are induced by maps $\tilde F:\tilde M\to\CT_{g',r'}$ equivariant under a homomorphism $F_*:\pi_1(M)\to \Map_{g',r'}$. Here $\tilde M$ and $\pi_1(M)$ are the (orbifold) universal cover and fundamental group of $M$. We refer to \cite{Thurston} for more on orbifolds.

The basic examples of holomorphic maps between moduli spaces are the {\em forgetful maps}, that is, the maps $\CM_{g,r}\to\CM_{g,r-k}$ obtained by unmarking $k\le r$ marked points. In fact, it was proved in \cite{griego} that forgetful maps are, as long as $g\ge 6$ and $g'\le 2g-2$, the only non-constant holomorphic maps $F:\CM_{g,r}\to\CM_{g',r'}$ between moduli spaces. The goal of this paper is to show that the same holds true when we replace the linear bound on $g'$ by an exponential bound:

\begin{sat}\label{main}
Suppose that $g\ge 4$ and that $g'\le 3\cdot 2^{g-3}$, and let $r,r'\ge 0$. Then every non-constant holomorphic map $F:\CM_{g,r}\to\CM_{g',r'}$ is a forgetful map. In particular, if such a map exists, then $g'=g$ and $r'\le r$.
\end{sat}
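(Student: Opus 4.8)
The plan is to run the standard two-step scheme for rigidity of maps between moduli spaces—first reduce the holomorphic statement to a group-theoretic statement about the induced homomorphism of mapping class groups, and then establish that statement by an induction on genus whose doubling step accounts for the exponential bound—after which one promotes the algebraic conclusion back to the map $F$.

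\emph{Step 1: reduction and non-triviality.} Since $\CM_{g,r}=\CT_{g,r}/\Map_{g,r}$ as a complex orbifold, $F$ is induced by a holomorphic map $\tilde F\colon\CT_{g,r}\to\CT_{g',r'}$ equivariant under a homomorphism $\phi=F_*\colon\Map_{g,r}\to\Map_{g',r'}$. I would first rule out $\phi$ having small image. Holomorphic maps are contracting for the Kobayashi metric, which on Teichm\"uller space coincides with the Teichm\"uller metric by Royden; hence if $\psi\in\ker\phi$ is pseudo-Anosov then $\tilde F$ is constant along the orbit $\{\psi^n x\}$ and, being distance non-increasing while $\psi$ translates its axis, must collapse that entire axis to a point. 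As the axes of pseudo-Anosov elements of any finite-index subgroup fill $\CT_{g,r}$, a finite (or otherwise too small) image would force $\tilde F$, and hence $F$, to be constant. Non-constancy of $F$ therefore guarantees that $\phi$ has large image; in particular $\phi$ is nontrivial on a large supply of Dehn twists.

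\emph{Step 2: classifying $\phi$.} This is the heart of the matter and the step I expect to be hardest. The goal is the algebraic rigidity statement that for $g\ge 4$ and $g'\le 3\cdot 2^{g-3}$ every homomorphism $\Map_{g,r}\to\Map_{g',r'}$ with large image is, up to conjugation, a forgetful homomorphism $\Map_{g,r}\to\Map_{g,r-k}$; in particular $g'=g$. The engine is the analysis of multitwists: using the commutation and braid relations one shows that $\phi$ sends a Dehn twist to a root of a multitwist supported on a canonically associated multicurve, which organizes into a $\phi$-equivariant map of curve complexes. Cutting $\Sigma_{g,r}$ along a curve reduces the problem to subsurfaces of smaller genus, to which an inductive hypothesis applies, and reassembling the two sides of the cut is what forces the recursion $B(g)=2B(g-1)$ on the admissible target genus; with base value $B(3)=3$ this yields exactly $B(g)=3\cdot 2^{g-3}$. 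The main obstacle is twofold: establishing the low-genus base cases directly, where no earlier rigidity result applies, and showing that within the stated range the only alternative to unmarking points—a genuine characteristic covering construction—is excluded because it would require target genus far exceeding $3\cdot 2^{g-3}$.

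\emph{Step 3: from the homomorphism back to $F$.} Knowing that $\phi$ is forgetful, so $g'=g$, the map $\tilde F\colon\CT_{g,r}\to\CT_{g,r-k}$ is equivariant over the forgetful homomorphism and in particular invariant under the point-pushing subgroup $K=\ker(\Map_{g,r}\to\Map_{g,r-k})$. The fibres of the forgetful Bers fibration $p\colon\CT_{g,r}\to\CT_{g,r-k}$ are bounded domains on which $K$ acts cocompactly, so the restriction of $\tilde F$ to a fibre is a $K$-invariant holomorphic map into the bounded domain $\CT_{g,r-k}$, descends to a holomorphic map out of a compact quotient, and is therefore constant. Thus $\tilde F$ factors as $\tilde F=G\circ p$ for a holomorphic self-map $G$ of $\CT_{g,r-k}$ commuting with the mapping class group action; by Royden's theorem $G$ is the identity, so $\tilde F=p$ and $F$ is the forgetful map, whence $g'=g$ and $r'\le r$ as claimed.
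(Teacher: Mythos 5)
There is a genuine gap, and it sits exactly where the paper's new contribution lies. In Step 2 you claim that for $g\ge 4$ and $g'\le 3\cdot 2^{g-3}$ every homomorphism $\Map_{g,r}\to\Map_{g',r'}$ with ``large image'' is, up to conjugation, forgetful. This is false in the stated range: the doubling construction (Example \ref{example multi-embedding} of the paper) produces an injective homomorphism $\Map_{g,1}\to\Map_{2g,0}$ induced by the two tautological embeddings of $S_{g,1}$ into the double of a compact genus-$g$ surface, and $2g\le 3\cdot 2^{g-3}$ already for $g\ge 5$. This homomorphism has enormous image and no pseudo-Anosov elements in its kernel, so your Step 1 criterion does not exclude it, and it is not a forgetful homomorphism. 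What distinguishes it is that it is \emph{reducible}: its image fixes the multicurve coming from $\D\Sigma$. The classification available in this range (Theorem \ref{thm rodrigo}, quoted from \cite{Rodrigo}) only says that non-trivial homomorphisms are induced by multi-embeddings; to conclude that $F_*$ is forgetful one must additionally show that $F_*$ is irreducible, i.e.\ that its image fixes no simple multicurve. Nothing in your proposal does this, and it cannot be done by soft Kobayashi-contraction arguments on kernels.

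The paper supplies precisely this missing step as Theorem \ref{thm irreducible}, proved via Theorem \ref{thm non-existence}: if $F_*(\pi_1(M))$ fixed a multicurve $\gamma$, the map would lift to $\CT_{g',r'}/\Stab(\gamma)$, which carries the Weil--Petersson metric (K\"ahler, dominated by the Kobayashi metric) and the strictly convex, subexponentially growing function $h_\gamma=\dist(\cdot,\CS_\gamma)^2$; flowing along $-\grad h_\gamma$ strictly decreases energy, contradicting the Wirtinger inequality which makes holomorphic maps energy minimizers. You would need to add an argument of this kind (or some substitute) for your scheme to close. Two smaller points: your Step 2 also proposes to reprove the classification of homomorphisms by induction on genus, which is a separate substantial paper \cite{Rodrigo} that the present paper uses as a black box; and in Step 3 the paper instead concludes by noting that $F$ is homotopic to the forgetful map (Teichm\"uller space being a classifying space for proper actions) and invoking the uniqueness statement for homotopic non-constant holomorphic maps from \cite[Prop.~3.2]{griego}, which is cleaner than factoring $\tilde F$ through the Bers fibration (where, in particular, your appeal to Royden's theorem for the self-map $G$ is not justified, since $G$ is not known to be a biholomorphism).
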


Note that some condition on $g$ is needed in Theorem \ref{main}: the holomorphic map $\CM_{2,0}\to\CM_{0,6}$ obtained by quotienting by the hyperelliptic involution is not a forgetful map. Similarly, some exponential bound for $g'$ is also needed: in Example \ref{example covers} in the last section of this paper we present a non-constant holomorphic map $\CM_{g,1}\to\CM_{(g-1)\cdot 2^{2g}-1,1}$ constructed via covers which is not a forgetful map.
\medskip

Let us briefly recall the strategy followed in \cite{griego}. First, one derives from \cite{Javi-Juan} that if $g\ge 6$ and $g'\le 2g-2$ then every non-trivial homomorphism $\Map_{g,r}\to \Map_{g',r'}$ is induced by forgetting marked points. It follows that, still in the same range, every non-constant holomorphic map $\CM_{g,r}\to\CM_{g',r'}$ is homotopic to a forgetful map. The claim then follows from the fact that moduli space is, as an analytic space, a quasi-projective variety \cite{Deligne-Mumford}, and that any two non-constant homotopic holomorphic maps from a quasi-projective variety to moduli space agree \cite[Prop 3.2]{Javi-Juan}.

The strategy of the proof of Theorem \ref{main} basically follows the same lines. The first and main ingredient is a theorem by the first author \cite{Rodrigo} classifying all non-trivial homomorphisms $\rho:\Map_{g,r}\to \Map_{g',r'}$ for $g,g'$ as in Theorem \ref{main}. The new difficulty we encounter is that it is no longer true that all such homomorphisms are induced by forgetful maps---see Example \ref{example multi-embedding}. We get however that all {\em irreducible} homomorphisms are induced by forgetful maps, where irreducible means that there is no  simple (non-trivial) multicurve $\gamma\subset\Sigma_{g',r'}$ which is fixed by the image $\rho(\Map_{g,r})$. What we need to prove is that homomorphisms $F_*$ induced by holomorphic maps $F:\CM_{g,r}\to\CM_{g',r'}$ are indeed irreducible. 

\begin{sat}\label{thm irreducible}
If $M$ is an irreducible quasi-projective variety and $F:M\to\CM_{g,r}$ is a non-constant holomorphic map, then the homomorphism $F_*:\pi_1(M)\to \Map_{g,r}$ is irreducible.
\end{sat}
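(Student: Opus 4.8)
The plan is to argue by contradiction. Suppose $F_*(\pi_1(M))$ fixes a non-trivial simple multicurve $\gamma\subset\Sigma_{g,r}$. The key point is that such an invariant multicurve would force the holomorphic map $F$ to factor through a lower-complexity moduli space, contradicting that $F$ is non-constant and produced from an irreducible $M$.

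\medskip

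First I would recall what it means, at the level of Teichm\"uller theory, for the image of a homomorphism to fix a multicurve. Fixing $\gamma$ (as an isotopy class of multicurve, hence setwise up to the action) means that the hyperbolic length function $\ell_\gamma$, or rather the associated extremal/hyperbolic length, descends to a well-defined function on $\CM_{g,r}$ near the image of $F$, and more to the point that the subsurface structure cut out by $\gamma$ is preserved. The natural object to exploit is the \emph{length of the shortest geodesic in} $\gamma$, or the collection of lengths of its components: if $\rho(\pi_1(M))$ fixes $\gamma$, the function
$$\Lambda = \log\!\Bigl(\textstyle\sum_{i}\ell_{\gamma_i}\Bigr)\colon \CT_{g,r}\to\BR$$
is invariant under $\rho(\pi_1(M))$ up to the finite-index subtlety of permuting the components, and therefore pulls back to a well-defined (continuous, plurisubharmonic after the right transform) function on a finite cover $\tilde M$ of $M$.

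\medskip

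The heart of the argument is then a maximum-principle / rigidity statement on quasi-projective varieties. The composition $\tilde F$ lands in $\CT_{g,r}$ and the relevant length function is, after composing with $F$, a bounded-below plurisubharmonic function on the quasi-projective variety $M$ (or a finite cover, which is again quasi-projective by irreducibility and the fact that finite \'etale covers of quasi-projective varieties are quasi-projective). Since $\log\ell_\gamma$ controls the Teichm\"uller/Weil--Petersson geometry and a fixed multicurve means the image of $F$ cannot approach the part of moduli space where $\gamma$ degenerates in an uncontrolled way, one extracts that the image of $F$ stays in a region on which the quotient by Dehn twists along $\gamma$ is geometrically meaningful. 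Concretely, invariance of $\gamma$ means $F$ factors through the quotient of $\CM_{g,r}$ by the action fixing $\gamma$, which is (a finite cover of) a moduli space of the cut surface $\Sigma_{g,r}\setminus\gamma$ times factors recording the twisting and the annular coordinates. The twisting coordinates are the obstruction.

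\medskip

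The main obstacle, and where I expect the real work to be, is controlling the \emph{twist parameters}. Fixing $\gamma$ setwise does not prevent the image $\rho(\pi_1(M))$ from containing Dehn twists along the components of $\gamma$; these act on Teichm\"uller space by translating the Fenchel--Nielsen twist coordinates $\tau_{\gamma_i}$, which are the imaginary parts of holomorphic coordinates whose real parts are $\log\ell_{\gamma_i}$. The crucial input is that a non-constant holomorphic map from a compact-type or quasi-projective source cannot have image with a non-trivial recurrent translation structure: the pairing $(\log\ell_{\gamma_i},\tau_{\gamma_i})$ assembles into a holomorphic map to an annulus or punctured disk $\Delta^*$ (the plumbing coordinate), and a holomorphic map $M\to\Delta^*$ from a quasi-projective $M$ either extends across the puncture or has image with nontrivial monodromy forcing the length $\ell_{\gamma_i}\to 0$, i.e. $F$ approaches the Deligne--Mumford boundary. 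Either alternative can be pushed to a contradiction with $F$ being non-constant and holomorphic into the interior $\CM_{g,r}$: the plumbing coordinate map $M\to\Delta^*$ on a quasi-projective variety extends to $M\to\Delta$ by Riemann's removable singularity theorem after passing to a smooth model, and one shows the extension, being holomorphic and landing in the interior, is constant equal to the degenerating fiber only if $F$ itself degenerates there, contradicting that $F$ maps into $\CM_{g,r}$. Carefully handling the permutation of components, passing to the right finite-index subgroup so that each $\gamma_i$ is individually fixed, and invoking \cite[Prop 3.2]{Javi-Juan}-style rigidity to turn the analytic factorization into the combinatorial statement of reducibility, is the delicate part; the rest follows the template of \cite{griego}.
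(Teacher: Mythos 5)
There is a genuine gap: your proposal never actually reaches a contradiction, and the mechanism you lean on is the one the paper explicitly discards. You correctly reduce to the situation where $F$ lifts to $F':M\to\CT_{g,r}/\Stab(\gamma)$, but from there you try to run McMullen's template: invariance of (a transform of) $\ell_\gamma$ plus plurisubharmonicity plus a maximum principle. That works when $M$ is closed, but for a non-compact quasi-projective $M$ a bounded-below plurisubharmonic function need not be constant, so the invariant function $\Lambda=\log\bigl(\sum_i\ell_{\gamma_i}\bigr)\circ F'$ tells you nothing by itself. The paper states this obstruction verbatim (``plurisubharmonicity is not enough to show that $\hat\ell_\gamma\circ F'$ would be constant'') and replaces the whole mechanism by a different one: the function $h_\gamma$, the \emph{square of the Weil--Petersson distance to the stratum} $\CS_\gamma$ of the completion where $\gamma$ is pinched, is shown to be strictly convex (a CAT($\kappa$) comparison argument, Lemma \ref{lem 3}), $\Stab(\gamma)$-invariant, and of subexponential growth; then Theorem \ref{thm non-existence} --- flowing a holomorphic map along $-\grad h_\gamma$ strictly decreases energy while the Wirtinger inequality forces holomorphic maps to be energy minimizers --- rules out any non-constant holomorphic map into $\CT_{g,r}/\Stab(\gamma)$. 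None of this appears in your sketch, and it is the entire content of the proof.

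A second, more local problem: your key intermediate claim, that the pairs $(\log\ell_{\gamma_i},\tau_{\gamma_i})$ ``assemble into a holomorphic map to an annulus or punctured disk,'' is unjustified. Fenchel--Nielsen length and twist are real-analytic but not holomorphic coordinates for the standard complex structure on $\CT_{g,r}$; the holomorphic plumbing parameters you have in mind are only defined near the Deligne--Mumford boundary, not globally, so there is no holomorphic map $M\to\Delta^*$ to which you could apply removable-singularity arguments. Consequently the dichotomy you propose (``either extends across the puncture or forces $\ell_{\gamma_i}\to 0$'') is not established, and the final sentence of your argument --- that ``either alternative can be pushed to a contradiction'' --- is precisely the assertion that needs proof. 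To repair the argument along the paper's lines you would need to produce a strictly convex, $\Stab(\gamma)$-invariant function of subexponential growth on $(\CT_{g,r},g_{\WP})$ and then invoke the energy/Wirtinger non-existence theorem.
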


Let us be clear about the terminology in Theorem \ref{thm irreducible}. Per se, a quasi-projective variety is nothing other than a Zariski open subset of a projective variety over some field, which we will always assume to be $\BC$. We can thus consider quasi-projective varieties as analytic spaces, referring to analytic maps between such spaces as holomorphic maps. For example, it is due to Deligne and Mumford \cite{Deligne-Mumford} that moduli space $\CM_{g,r}$ is a quasi-projective variety. Moreover, since we are considering the moduli space as an orbifold (that is, we are considering it as a fine moduli space, not as a coarse one), we also want to allow the case that our variety $M$ is an orbifold and that the map $F:M\to\CM_{g,r}$ is a map between orbifolds. Let us stress this point: the quasi-projective variety $M$ in Theorem \ref{thm irreducible} is allowed to be orbifold. If $M$ is an orbifold, then we assume that the holomorphic map $F:M\to\CM_{g,r}$ is induced by a holomorphic map $\tilde F:\tilde M\to \CT_{g,r}$ which is equivariant under a homomorphism $F_*:\pi_1(M)\to\Map_{g,r}$, where $\tilde M$ is the orbifold universal cover $\pi_1(M)$ the orbifold fundamental group.
\medskip

Once the statement of Theorem \ref{thm irreducible} is clarified, let us add a few comments on its proof. First note that since every quasi-projective variety of complex dimension at least 2 contains a wealth of algebraic curves, it suffices to prove Theorem \ref{thm irreducible} in the case that the domain $M$ is 1-dimensional, and more specifically a Riemann surface of finite analytic type. If the domain of $F:M\to\CM_{g,r}$ is closed, something we cannot assume here because moduli space is not closed, then the irreducibility of $F_*$ is due to McMullen \cite{McMullen}. His basic observation is that if $F_*(\pi_1(M))$ were to fix a multicurve $\gamma$ then the map $F:M\to\CM_{g,r}$ would lift to a map $F':M\to\CT_{g,r}/\Stab(\gamma)$.
The function $\ell_\gamma$ associating to a point in Teichm\"uller space $X$ the length of $\gamma$ with respect to the hyperbolic structure of $X$ descends to a well-defined function $\hat\ell_\gamma:\CT_{g,r}/\Stab(\gamma)\to\BR_{>0}$. Wolpert \cite{Wolpert geodesically convex} has proved that $\ell_\gamma$ is plurisubharmonic, and from this fact McMullen obtains that $\hat\ell_\gamma\circ F'$ would be constant. He then gets a contradiction from what he describes, without overwhelming the reader with unnecessary details, as `an argument with quasifuchsian groups' \cite[p. 136]{McMullen}.

In the setting we care mostly about, that is the case that the domain $M$ is not compact, we cannot argue like that: plurisubharmonicity is not enough to show that $\hat\ell_\gamma\circ F'$ would be constant. Indeed, neither plurisubharmonicity nor quasifuchsian groups will play any role here. For our argument we rely directly on  what one can think are the essential qualities of the Weil-Petersson geometry: $(\CT_{g,r},g_{\WP})$ is K\"ahler, incomplete, negatively curved, geodesically convex, and the metric is dominated by the Kobayashi metric, that is the Teichm\"uller metric. Consider namely the function $h_\gamma:\CT_{g,r}\to\BR_{>0}$ sending $X\in\CT_{g,r}$ to the square of its distance to the stratum $\CS_\gamma\subset\overline\CT_{g,r}$ consisting of those points with nodal locus $\gamma$. Negative curvature implies that $h_\gamma$ is strictly convex. Moreover, when it exists, the norm of its differential $dh_\gamma\vert_X$ is at most lineal in the distance $d_{\WP}(X_0,X)$ to some arbitrary base point $X_0$. This implies the function $\Vert dh_\gamma\vert_X\Vert$  is subexponential and  so we say that $h_\gamma$ has {\em subexponential growth}. We get thus a contradiction from the following result:

\begin{sat}\label{thm non-existence}
Suppose that $N$ is a K\"ahler manifold whose metric is dominated by a multiple of the Kobayashi metric and which admits a strictly convex function with subexponential growth. Let also $M$ be either a closed connected K\"ahler manifold or an irreducible quasi-projective variety. Then there are no non-constant holomorphic maps $F:M\to N$.
\end{sat}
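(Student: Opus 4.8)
The plan is to pull everything back to the single real function $u:=h\circ F$, where $h$ is the given strictly convex function on $N$, and to play its plurisubharmonicity off against its growth. The first point is that, since $N$ is K\"ahler and $F$ is holomorphic, $F$ is harmonic along every holomorphic disc; thus for a holomorphic disc $\varphi:\BD\to M$, writing $V=(F\circ\varphi)_*\partial_x$, one computes $\Delta(u\circ\varphi)=\operatorname{Hess}(h)(V,V)+\operatorname{Hess}(h)(JV,JV)$ up to a positive conformal factor. Strict convexity of $h$ makes the right-hand side nonnegative, and strictly positive wherever $F\circ\varphi$ is immersive, so $u$ is plurisubharmonic on $M$ and strictly so wherever $dF\neq 0$. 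This already disposes of the case that $M$ is a closed connected K\"ahler manifold: a plurisubharmonic function on a compact complex manifold is constant, whereas a non-constant $F$ has $dF\neq0$ on a dense open set, where $u$ would be strictly plurisubharmonic and hence non-constant---a contradiction.

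For the quasi-projective case I would first reduce to dimension one. Any two points of an irreducible quasi-projective variety lie on an irreducible algebraic curve, so a non-constant $F$ restricts non-constantly to some such curve; passing to its normalization yields a non-constant holomorphic map from a finite-type Riemann surface $C=\overline C\setminus\{p_1,\dots,p_n\}$ to $N$, still denoted $F$. If $C$ is compact we are in the previous case. If $C$ is $\BC$ or $\BC^\ast$ its Kobayashi pseudometric vanishes identically, so the domination hypothesis $g_N\le\const\cdot k_N$ forces $dF\equiv0$. Hence we may assume $C$ is hyperbolic and non-compact, with $k_C$ its complete hyperbolic metric.

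The crux is a growth estimate near each puncture. Because holomorphic maps are Kobayashi-distance nonincreasing and $g_N$ is dominated by a multiple of $k_N$, we get $\dist_N\!\big(F(z_0),F(z)\big)\le\const\cdot\dist_{k_C}(z_0,z)$; since the hyperbolic distance into a cusp grows only like $\log\log(1/|z|)$, the image $F(z)$ leaves every compact set of $N$ extremely slowly, $\dist_N\!\big(X_0,F(z)\big)\lesssim\log\log(1/|z|)$. Integrating the subexponential bound on $\|dh\|$ along short hyperbolic paths then gives $u(z)=h(F(z))=o\!\big(\log(1/|z|)\big)$ as $z\to p_i$. I expect this to be the main obstacle, since it is precisely here that all three hypotheses on $N$ must cooperate: one has to check that the double-logarithmic smallness forced by the cusp geometry together with the Kobayashi domination really does beat the subexponential growth of $h$, landing $u$ below the logarithmic threshold.

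To finish, I would invoke the removable-singularity principle for subharmonic functions: a subharmonic function on the punctured disc that grows like $o(\log(1/|z|))$ extends subharmonically across the puncture. Removing all the $p_i$ in this way produces a subharmonic function on the compact Riemann surface $\overline C$, which is therefore constant. Thus $u$ is constant on $C$ and hence nowhere strictly subharmonic, forcing $dF\equiv0$ and contradicting that $F$ is non-constant. When $M$ carries an orbifold structure the same reasoning applies after passing to a finite manifold cover, or equivalently by running the argument on the universal cover with the equivariant lift of $F$; none of the metric estimates are affected.
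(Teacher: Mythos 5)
Your argument is correct, but it takes a genuinely different route from the paper's. The paper also reduces to a finite-type hyperbolic Riemann surface, but then argues variationally: it flows $F$ along the negative gradient of the (smoothed) convex function, cut off by bump functions $\rho_L$ supported outside the cusps, and shows via the first variation of energy that for large $L$ the energy strictly decreases---the error term $\int\tr(d\rho_L\otimes F^*df)$ being killed by playing the subexponential growth of $\Vert df\Vert$ against the exponential decay of the length of distance spheres in the cusps---contradicting the Wirtinger inequality, which makes holomorphic maps absolute energy minimizers under compactly supported perturbations. You instead pull the convex function back to $u=h\circ F$, use harmonicity of holomorphic maps between K\"ahler manifolds to see that $u$ is subharmonic (strictly so where $dF\neq0$), convert the Kobayashi--Lipschitz bound plus subexponential growth into the estimate $|u(z)|\lesssim\log\log(1/|z|)\cdot(\log(1/|z|))^{C\epsilon}=o(\log(1/|z|))$ at each puncture, and finish with the removable singularity theorem for subharmonic functions and the maximum principle on the compactification. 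The four hypotheses on $N$ play the same roles in both proofs, but yours trades the energy/first-variation/Wirtinger machinery for one-variable potential theory, which is arguably more elementary; the paper's version has the advantage of not needing the puncture-by-puncture analysis of where the threshold $\log(1/|z|)$ comes from. Two points you should make explicit to close the argument: (i) the given strictly convex function is only continuous, so before invoking its Hessian you must replace it by a smooth strictly convex approximation with the same Lipschitz growth (Greene--Wu, exactly as the paper does in its preliminary remark before its proof); and (ii) the $\log\log(1/|z|)$ bound on $d_C(z_0,z)$ needs the Schwarz-lemma comparison of the hyperbolic metric of $C$ with that of an embedded punctured disc around $p_i$, after which the choice of $\epsilon$ small compared with the Lipschitz constant of $F$ is what lands $u$ strictly below the logarithmic threshold.
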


We stress that we are neither assuming that $N$ is complete, nor that the convex function $f$ is smooth. Let us however suppose for a moment that $f$ is smooth. The basic idea of the proof of Theorem \ref{thm non-existence} is that convexity implies that flowing any non-constant map $F:M\to N$ in the direction of the negative gradient of $f$ reduces the energy of $F$. On the other hand, the standard Wirtinger inequality implies that holomorphic maps are absolute energy minimizers in their homotopy class. A contradiction.
\medskip

This paper is organized as follows. After recalling some basic properties of Teichm\"uller space and the Weil-Petersson metric in Section \ref{sec preli teich}, we prove Theorem \ref{thm non-existence} in section \ref{sec irreducible}. Theorem \ref{thm irreducible} and Theorem \ref{main} are dealt with in Section \ref{sec main}.

\subsection*{Acknowledgements}
The first author acknowledges financial support from the grant CEX2019-000904-S funded by MCIN/AEI/10.13039/501100011033 and from the grant PGC2018-101179-B-I00.

\section{Teichm\"uller space and the Weil-Petersson metric}\label{sec preli teich}
Throughout this section, let us fix $g,r\ge 0$ satisfying $2g+r\ge 3$. Let us also denote by $S_{g,r}$ the smooth oriented surface of genus $g$ with $r$ ends and empty boundary.

\subsection{Preliminaries}
Recall that a Riemann surface $\Sigma$ is of finite analytic type if it is biholomorphic to the complement of finitely many points in a closed Riemann surface. As long as it has negative Euler-characteristic, $\Sigma$ admits a unique conformal complete hyperbolic metric (that is, of constant curvature $-1$) which automatically has finite volume. Here, conformal means that, for all $x\in\Sigma$, the scalar product induced on $T_x\Sigma$ is invariant under multiplication by the imaginary unit $i\in\BC$.

Under a {\em marked Riemann surface} we understand a pair $(\Sigma,\phi:S_{g,r}\to\Sigma)$ where $\phi$ is a orientation preserving homeomorphism. The {\em Teichm\"uller space} $\CT_{g,r}$ is the space of all equivalence classes of marked Riemann surfaces, where $(\Sigma,\phi:S_{g,r}\to\Sigma)$ and $(\Sigma',\phi':S_{g,r}\to\Sigma')$ are equivalent if and only if $\phi'\circ\phi^{-1}:\Sigma\to\Sigma'$ is isotopic to a biholomorphism.

The mapping class group $\MMap_{g,r}=\Homeo_+(S_{g,r})/\Homeo_0(S_{g,r})$ is the group of isotopy classes of orientation preserving self-homeomorphisms of $S_{g,r}$. It acts on the ends of $S_{g,r}$. The {\em pure mapping class group} $\Map_{g,r}$ is the finite index subgroup fixing each end. 

The mapping class group also acts on Teichm\"uller space, and the quotient
$$\CM_{g,r}=\CT_{g,r}/\Map_{g,r}$$
under the action of the pure mapping class group is the {\em moduli space}. Dividing by the whole mapping class group amounts to forgetting the marking. Since we are dividing by the pure mapping class group, the quotient $\CM_{g,r}$ is the space of biholomorphism classes of Riemann surfaces of genus $g$ with $r$ labelled points. Teichm\"uller space admits a structure as simply connected complex manifold, and the mapping class group acts on it by biholomorphisms. Hence, $\CM_{g,r}$ can be seen as a complex orbifold with universal cover $\CT_{g,r}$. It is well known that $\CM_{g,r}$ admits an algebraic structure. Indeed, $\CM_{g,r}$ is an irreducible quasi-projective variety, that is a Zariski open subset of a projective variety \cite{Deligne-Mumford}.

Recall now that the {\em Kobayashi metric} of a complex manifold $M$ is the largest pseudo-metric with the property that every holomorphic map $F:\BH^2\to M$ is 1-Lipschitz, where $\BH^2$ is the hyperbolic plane endowed with the hyperbolic metric. In the case of Teichm\"uller space, the Kobayashi metric is an actual metric, which moreover agrees with the well-known Teichm\"uller metric \cite{Royden}. We refer to \cite{Farb-Margalit} and \cite{Imayoshi-Taniguchi} for more on mapping class groups and Teichm\"uller spaces, and to \cite{Kobayashi} for more on the Kobayashi metric on general complex manifolds.

\subsection{The Weil-Petersson metric}
Teichm\"uller space admits many natural mapping class group invariant metrics---we already mentioned the Teichm\"uller metric. Here we will be working with the so-called Weil-Petersson metric $g_{\WP}$. This Riemannian metric was introduced by Weil in \cite{Weil}, and Ahlfors \cite{Ahlfors} proved that it is K\"ahler. Since this property will be key in this paper, let us recall that a K\"ahler metric on a complex manifold $N$ is nothing other than a Riemannian metric $\langle\cdot,\cdot\rangle$ with $\langle iX_p,iY_p\rangle_p=\langle X_p,Y_p\rangle_p$ for all $p\in N$ and $X_p,Y_p\in T_pN$, and so that the K\"ahler form 
$$\omega_p(X_p,Y_p)=\langle X_p,iY_p\rangle_p$$
is closed: $d\omega=0$. Note in particular that every conformal metric on a Riemann surface is automatically K\"ahler. We refer to for example \cite{Ballman} for facts about K\"ahler manifolds.

A first key fact about the Weil-Petersson metric is that it is dominated, up to a factor $\vert 2\pi\chi(S_{g,r})\vert^{\frac 12}$, by the Teichm\"uller metric (see for example \cite[Prop. 2.4]{McMullen Kahler}). Since the Teichm\"uller metric agrees with the Kobayashi metric, we have:

\begin{lem}\label{lem 1}
The Weil-Petersson metric on $\CT_{g,r}$ is dominated by a multiple of the Kobayashi metric.\qed
\end{lem}

From a metric point of view, what was maybe first noted about the Weil-Petersson metric is that it is incomplete \cite{Wolpert incomplete}. To see that this is the case, Wolpert proved that muticurves in $S_{g,r}$ can be pinched in finite time. Indeed, Yamada \cite{Yamada} identified the completion $\overline\CT_{g,r}$ of $(\CT_{g,r},g_{\WP})$ with what is known as {\em augmented Teichm\"uller space}. In other words, the completion of $(\CT_{g,r},g_{\WP})$ consists of {\em nodal curves}, that is Riemann surfaces where some multicurve has been pinched to have length $0$. Given a simple (always non-trivial) multicurve $\gamma$ in $S_{g,r}$, we denote by
\begin{equation}\label{eq set S}
\CS_\gamma=\{X\in\overline\CT_{g,r}\text{ with } \ell_X(\gamma)=0\}
\end{equation}
the (closed) subset of the completion of $(\CT_{g,r},g_{\WP})$ where the components of $\gamma$ have been pinched. We stress the fact that $\CS_\gamma\neq\emptyset$.

From a Riemannian point of view, what is maybe best known about the Weil-Petersson metric is that its sectional curvature is negative, a fact due to Royden, Wolpert \cite{Wolpert negative} and Tromba \cite{Tromba WP negative}. However, incomplete Riemannian manifolds, even negatively curved ones, can in general be pretty nasty to work with. In this case one is saved by a landmark result of Wolpert \cite{Wolpert geodesically convex} asserting that $(\CT_{g,r},g_{WP})$ is a geodesic metric space, or in Riemannian terms that $g_{\WP}$ is geodesically convex in the sense that any two points are joined by a Weil-Petersson geodesic. In the same paper, he proved that the length function $\ell_\gamma:\CT_{g,r}\to(0,\infty)$ is convex, meaning that $\ell_\gamma\circ\alpha:(a,b)\to\BR$ is convex for any Weil-Petersson geodesic $\alpha:(a,b)\to\CT_{g,r}$.

As a consequence of the convexity of $\ell_\gamma$ and of the geodesic convexity of $\CT_{g,r}$ one gets that the latter is a nested union of compact convex sets: pick two filling curves $\gamma_1$ and $\gamma_2$, that is two curves which have positive geometric intersection number with every curve, and consider compact sets of the form 
\begin{equation}\label{eq good convex}
K(L)=\{X\in\CT_{g,r}\text{ with }\ell_{\gamma_1}(X)+\ell_{\gamma_2}(X)\le L\}.
\end{equation}
Since the sectional curvature is overall negative, one has that on $K(L)$ it is bounded from above by some constant $\kappa_L<0$. This implies that $K(L)$ is CAT($\kappa_L$), and hence that $\CT_{g,r}=\cup_LK(L)$ is CAT(0). Since being CAT(0) is a property inherited by metric completions of geodesic metric spaces, we get that $\overline\CT_{g,r}$ is also CAT(0). As a consequence, any two points in the completion $\overline\CT_{g,r}$ of $(\CT_{g,r},g_{\WP})$ are joined by a unique geodesic segment. We refer to \cite{Bridson-Haffliger} for facts on CAT(0)-spaces.

\subsection{A strictly convex function}
Convex functions, in fact strictly convex functions, play a key role in this note.

\begin{defi*}
A continuous function $f\in C^0(M)$ on a Riemannian manifold is {\em strictly convex}, if for every compact set $K\subset M$ there are $\delta,C>0$ such that for any $\epsilon<\delta$ and geodesic segment $\alpha:[-\epsilon,\epsilon]\to M$ parameterized by arc length and with $\alpha(0)\in K$ we have $f(\alpha(\epsilon))+f(\alpha(-\epsilon))-2f(\alpha(0))\ge C\cdot\epsilon^2$.
\end{defi*}

\begin{bem}
Note that a smooth function $f\in C^\infty(M)$ is strictly convex if and only if we have $\frac{d^2}{dt^2}(f\circ\alpha)\vert_{t=0}>0$ for every geodesic $\alpha:(\epsilon,\epsilon)\to M$ parameterized by arc length.
\end{bem}

Fixing a simple multicurve $\gamma\subset S_{g,r}$ let $\CS_\gamma\subset\overline\CT_{g,r}$ be as in \eqref{eq set S} the set of points in the completion where $\gamma$ has been pinched. Convexity of $\ell_\gamma$ on $\CT_{g,r}$ implies that the set $\CS_\gamma$ is convex in $\overline\CT_{g,r}$. Now, since $\overline\CT_{g,r}$ is CAT(0), and since in such a space the distance to a convex set is a convex function, we get that
$$\dist_\gamma:\CT_{g,r}\to\BR_{>0},\ \dist_\gamma(X)=\min_{Y\in\CS_\gamma}d_{\overline\CT_{g,r}}(X,Y)$$
is convex. We claim that its square is strictly convex:

\begin{lem}\label{lem 3}
Let $\gamma\subset S_{g,r}$ be a simple multicurve. The function 
\begin{equation}\label{eq height function}
h_\gamma:\CT_{g,r}\to\BR_{>0},\ h_\gamma(X)=\min_{Y\in\CS_\gamma}d_{\overline\CT_{g,r}}(X,Y)^2
\end{equation}
is strictly convex.
\end{lem}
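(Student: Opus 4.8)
The plan is to prove strict convexity of $h_\gamma = \dist_\gamma^2$ by combining the already-established convexity of $\dist_\gamma$ with the negative curvature (CAT(0) property) of $\overline\CT_{g,r}$. The key point is that in a CAT($\kappa$) space with $\kappa < 0$, the distance to a convex set is not merely convex but satisfies a \emph{quantitative} convexity estimate along geodesics, and squaring converts this into the uniform quadratic lower bound demanded by the definition of strict convexity. Concretely, I would first record that on each compact set $K = K(L)$ as in \eqref{eq good convex} the sectional curvature is bounded above by some $\kappa_L < 0$, so that $K(L)$ is CAT($\kappa_L$); this localization is what lets us extract a uniform constant $C$ depending only on $K$.

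The main computation would proceed as follows. Fix a compact $K$ and a point $X = \alpha(0) \in K$ on a short unit-speed geodesic $\alpha:[-\epsilon,\epsilon]\to\CT_{g,r}$. Let $Y \in \CS_\gamma$ realize $\dist_\gamma(X) = d(X,Y) =: \ell$, and consider the geodesic triangle with vertices $\alpha(-\epsilon), \alpha(\epsilon), Y$. In a CAT($\kappa_L$) space the midpoint comparison (the quantitative version of the CAT inequality) gives that the median from $Y$ to the midpoint $X$ of the opposite side is strictly shorter than the Euclidean/comparison prediction by an amount controlled by $\kappa_L$, $\epsilon$, and $\ell$. Writing $a = \dist_\gamma(\alpha(\epsilon))$ and $b = \dist_\gamma(\alpha(-\epsilon))$, we have $a, b \le d(\alpha(\pm\epsilon), Y)$, so that after squaring the comparison inequality yields
\begin{equation}\label{eq:key-estimate}
a^2 + b^2 - 2\ell^2 \ge \epsilon^2 + c(\kappa_L, \ell)\,\epsilon^2,
\end{equation}
where the first $\epsilon^2$ is the flat (CAT(0)) parallelogram contribution and the extra term $c(\kappa_L,\ell)\epsilon^2$ is the genuinely hyperbolic gain coming from $\kappa_L < 0$. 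Since $Y$ also realizes the minimum for $X$, we have $h_\gamma(\alpha(\pm\epsilon)) = a^2$ resp.\ $b^2$ and $h_\gamma(X) = \ell^2$, so \eqref{eq:key-estimate} is exactly $h_\gamma(\alpha(\epsilon)) + h_\gamma(\alpha(-\epsilon)) - 2h_\gamma(\alpha(0)) \ge C\epsilon^2$ with $C = 1 + c > 0$.

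The hard part will be making the constant $C$ uniform over $K$, that is, independent of the basepoint $X \in K$ and of the foot $Y \in \CS_\gamma$, whose distance $\ell = \dist_\gamma(X)$ varies. Two subtleties must be handled. First, the comparison constant $c(\kappa_L,\ell)$ from the CAT($\kappa_L$) median inequality degenerates as $\ell \to 0$, i.e.\ for $X$ near the stratum $\CS_\gamma$; there one cannot rely on the negative-curvature gain and must instead argue directly that near $\CS_\gamma$ the function $\dist_\gamma$ itself already behaves like the flat distance-to-a-convex-set, whose square $\ell^2$ has Hessian bounded below by the identity on the normal directions—so the flat $\epsilon^2$ term alone suffices and $C \ge 1$. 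Second, I must ensure the whole geodesic triangle stays inside a single CAT($\kappa_L$) ball; since $K$ is compact and $\epsilon < \delta$ is small, the feet $Y$ and the segment $\alpha$ lie in a bounded neighborhood of $K$ in $\overline\CT_{g,r}$, which is contained in some $K(L')$ with uniform $\kappa_{L'} < 0$. Combining the near-stratum regime (where flatness gives $C \ge 1$) with the bounded-away regime (where the curvature term gives a uniform positive $c$) produces a single $C > 0$ valid on all of $K$, completing the verification of strict convexity.
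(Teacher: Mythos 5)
Your overall strategy --- localize to a compact set, use the upper curvature bound $\kappa_L<0$ on $K(L)$ to run a CAT($\kappa$) comparison, and extract a uniform constant --- is in the same spirit as the paper's argument, but the central comparison step as you set it up is invalid, for a reason the paper explicitly flags with the example of the squared distance to a line in $\BR^2$. You fix the point $Y\in\CS_\gamma$ realizing $\dist_\gamma(\alpha(0))$ and compare the triangle with vertices $\alpha(-\epsilon),\alpha(\epsilon),Y$. The CAT($\kappa$) inequality then bounds $d(\alpha(\epsilon),Y)^2+d(\alpha(-\epsilon),Y)^2-2\,d(\alpha(0),Y)^2$ from below; but $h_\gamma(\alpha(\pm\epsilon))$ is a minimum over $\CS_\gamma$ and hence only satisfies $h_\gamma(\alpha(\pm\epsilon))\le d(\alpha(\pm\epsilon),Y)^2$, so your key estimate points the wrong way: a lower bound on the comparison quantity gives no lower bound on $h_\gamma(\alpha(\epsilon))+h_\gamma(\alpha(-\epsilon))-2h_\gamma(\alpha(0))$. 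Indeed the infimum of the uniformly strictly convex functions $d(\cdot,Y)^2$, as $Y$ ranges over a convex set, need not be strictly convex at all --- that is exactly the flat line example, where your ``flat $\epsilon^2$ parallelogram contribution'' evaporates after taking the minimum. To make the inequalities point the right way one must work with the feet of the \emph{endpoints} $\alpha(\pm\epsilon)$, where $h_\gamma$ is computed exactly, and produce an auxiliary point furnishing an \emph{upper} bound for $h_\gamma(\alpha(0))$; this is what the paper's points $y^\pm$ and $m$ accomplish.

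There is a second, independent problem: your triangle has a vertex $Y$ on the stratum $\CS_\gamma\subset\overline\CT_{g,r}\setminus\CT_{g,r}$, and no set $K(L')$ (these are compact subsets of $\CT_{g,r}$) contains such a point, so the curvature bound $\kappa_{L'}<0$ is not available on your triangle; the completion is only known to be CAT(0), and the negative upper curvature bound is only asserted on compact subsets of Teichm\"uller space itself. The paper avoids this by truncating the minimizing segments at a definite distance $\delta_0$ from $\CS_\gamma$ and running the comparison only on the resulting quadrilateral, which does lie in a fixed $K(L)$. Your ``near-stratum regime'' is, by contrast, a non-issue for the lemma as stated: since $K\subset\CT_{g,r}$ is compact, $\dist_\gamma$ is bounded below on $K$, and in any case the flat model cannot rescue strict convexity there, as noted above. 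Finally, the genuinely nontrivial input you would still need --- that in a hyperbolic plane of curvature $\kappa<0$ the squared distance to a disjoint geodesic segment is strictly convex along another disjoint segment, with a constant depending only on bounds for the separation --- is not automatic from a midpoint comparison and is precisely the elementary but fiddly computation the paper carries out before the proof.
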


It is well-known that in a CAT(0)-space, the square of the distance function to a given point is strictly convex. This is however not true for the square of the distance function to a general convex set---think of the square of the distance function to a line in $\BR^2$. This is why we have to work a bit to prove Lemma \ref{lem 3}. The proof will rely on a standard argument in comparison geometry together with the elementary fact that if $a,b,c,d\in\BH^2$ are 4 points such that the geodesic segments $[a,b]$ and $[c,d]$ are non-degenerate and disjoint, then the function $d_{\BH^2}(\cdot,[c,d])^2$ is strictly convex along $[a,b]$. Although this seems pretty much evident and is surely well-known, we didn't find a handy reference. Thus, we give an argument. Well, note that every point in $[a,b]$ is contained in a closed interval $[a',b']\subset[a,b]$ on which $d_{\BH^2}(\cdot,[c,d])^2$ agrees with one of the three functions
\begin{equation}\label{eq estoy hasta los huevos}
p\mapsto d_{\BH^2}(p,c)^2,\ p\mapsto d_{\BH^2}(p,d)^2,\text{ or }p\mapsto d_{\BH^2}(p,\gamma)^2
\end{equation}
where $\gamma$ is the unique bi-infinite geodesic containing $[c,d]$---to get a clean picture note that in the last case we have that $[a',b']\cap\gamma=\emptyset$. Anyways, since the first two functions in \eqref{eq estoy hasta los huevos} are strict convex and since $d_{\BH^2}(\cdot,[c,d])^2$ is $C^1$, it suffices to argue that the function $t\mapsto d_{\BH^2}(\alpha(t),\gamma)^2$ is strictly convex whenever $t\to\alpha(t)$ parameterizes by arc length an (open) geodesic segment disjoint of $\gamma$. Since this function is smooth, it suffices to prove that its second derivative
$$2\left(\frac d{dt}d_{\BH^2}(\alpha(t),\gamma)\right)^2+2\cdot d_{\BH^2}(\alpha(t),\gamma) \cdot\frac{d^2}{dt^2}d_{\BH^2}(\alpha(t),\gamma)$$
is positive. Well, the second summand is non-negative because the distance function is convex and the first is positive unless $d_{\BH^2}(\alpha(t),\gamma)$ attains its minimum at $t$. Now, when we are in the minimum, it follows from a calculation (using for example \cite[p.454, 2.3.1 (v)]{Buser}) that the second factor is positive at that $t$. Having proved the strict convexity of $h$, we get that also the third function in \eqref{eq estoy hasta los huevos} is strict convex, and hence that the function $d_{\BH^2}(\cdot,[c,d])^2$ itself is strict convex on $[a,b]$. In fact, since everything, including lower bounds for the second derivatives, depend smoothly on $a,b,c,d$ we get that there is a constant $C$ which depends only on upper and lower bounds\footnote{Actually, a lower bound suffices.} for the distance to $[c,d]$ from points in $[a,b]$ so that we have 
\begin{equation}\label{eq no te lo puedes ni imaginar}
d_{\BH^2}(p^-,[c,d])^2+d_{\BH^2}(p^+,[c,d])^2-2\cdot d_{\BH^2}(p^0,[c,d])^2\ge C\cdot d_{\BH^2}(p^-,p^0)
\end{equation}
for all tuples $(p^-,p^0,p^+)$ of distinct points in $[a,b]$ with $d_{\BH^2}(p^-,p^0)=d_{\BH^2}(p^0,p^+)$. 

Well, after all this preparation, we are ready to prove Lemma \ref{lem 3}.

\begin{proof}[Proof of Lemma \ref{lem 3}]
Suppose that $K\subset\CT_{g,r}$ is compact, let $\delta_0>0$ be so that the distance between $K$ and $\overline\CT_{g,r}\setminus\CT_{g,r}$ is at least $10\delta_0$, and let $\CN_{\delta_0}(K)$ be the set of points at distance at most $\delta_0$ of $K$. Finally, fix $L>0$ so that the compact convex set $K(L)$ from \eqref{eq good convex} contains both $\CN_{\delta_0}(K)$ and every point $y\in\CT_{g,r}$ with $d_{\CT}(y,\CS_\gamma)=\delta_0$ and which lies in a minimizing geodesic segment joining $\CS_\gamma$ to some $x\in\CN_{\delta_0}(K)$. All points considered below will belong to $K(L)$.

Anyways, suppose now that we have a geodesic
$$\alpha:[-\delta_0,\delta_0]\to\CT_{g,r}$$ 
parameterized by arc length and with $\alpha(0)\in K$ and take $\epsilon\le\delta_0$.

\begin{figure}[h]
\begin{center}
\begin{tikzpicture}

\node[anchor=south west,inner sep=0] at (0,0) {\includegraphics[width=0.5\linewidth]{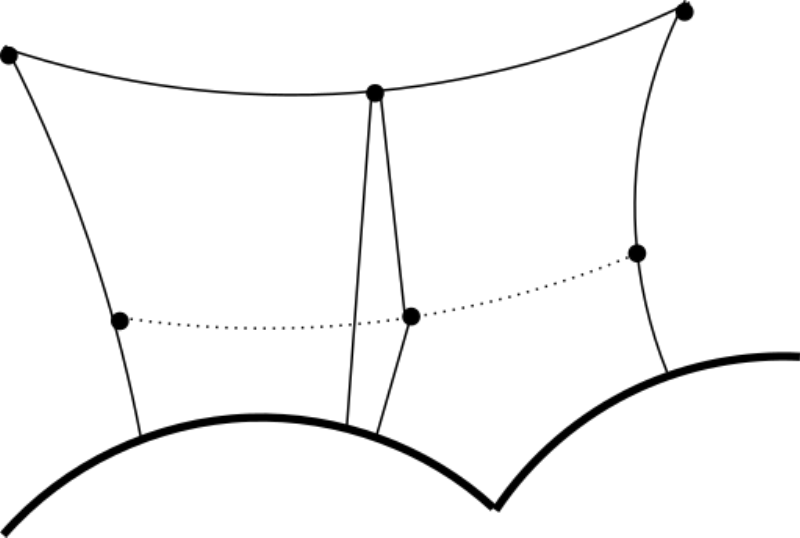}};

\node[label=above right:{$x^-=\alpha(-\epsilon)$}] at (-1.9,3.7){};
\node[label=above right:{$\alpha(0)$}] at (2.1,3.4){};
\node[label=above right:{$x^+=\alpha(\epsilon)$}] at (5.1,4){};
\node[label=above right:{$y^-$}] at (-0.1,1.2){};
\node[label=above right:{$m$}] at (3,1.1){};
\node[label=above right:{$y^+$}] at (4.9,1.5){};
\end{tikzpicture}

\end{center}
\caption{The bold printed lines represent $\CS_\gamma$. The lines joining $x^{\pm}$ to $\CS_\gamma$ represent the segments $I^\pm$. The points $y^{\pm}$ are chosen so that $d_{\CT}(y^{\pm},\CS_\gamma)=\delta_0$ and the point $m$ is the projection of $\alpha(0)$ to $[y^{-1},y^+]$. Since $\overline\CT_{g,r}$ is CAT(0) we have that $d_{\CT}(m,\CS_\gamma)\le \delta_0$.}
\label{fig 1}
\end{figure}

As in Figure \ref{fig 1} consider the shortest geodesic segments $I^-,I^+$ joining $x^-=\alpha(-\epsilon)$ and $x^+=\alpha(\epsilon)$ to $\CS_\gamma$. Let also $y^{\pm}\in I^\pm$ be the point at distance $\delta_0$ from $\CS_\gamma$, and finally let $m\in[y^-,y^+]$ be the point in there closest to $\alpha(0)$ (compare with Figure \ref{fig 1}). Convexity in the CAT(0)-space $\overline\CT_{g,r}$ of the distance to the convex set $\CS_\gamma$ implies that 
$$d_{\CT}(m,\CS_\gamma)\le\frac 12\left(d_{\CT}(y^-,\CS_\gamma)+d_{\CT}(y^\pm,\CS_\gamma)\right)=\delta_0$$
and hence that
$$d_{\CT}(\alpha(0),\CS_\gamma)\le d_{\CT}(\alpha(0),m)+d_{\CT}(m,\CS_\gamma)\le d_{\CT}(\alpha(0),m)+\delta_0.$$
Convexity of the distance function on $\CT_{g,r}$ implies also that
$$d_{\CT}(x^-,y^-)+d_{\CT}(x^+,y^+)-2\cdot d_\CT(\alpha(0),m)\ge 0.$$
Multiplying out and invoking the last two displayed equations we get that 
\begin{align*}
h(\alpha&(-\epsilon)+h(\alpha(\epsilon))-2h(\alpha(0))\ge\\
&= d_\CT(x^-,\CS_\gamma)^2+d_\CT(x^+,\CS_\gamma)^2-2\cdot d_\CT(\alpha(0),\CS_\gamma)^2\\
&\ge (d_\CT(x^-,y^-)+\delta_0)^2+(d_\CT(x^+,y^+)+\delta_0)^2-2\cdot(d_\CT(\alpha(0),m)+\delta_0)^2\\
&\ge d_\CT(x^-,y^-)^2+d_\CT(x^+,y^+)^2-2\cdot d_\CT(\alpha(0),m)^2
\end{align*}

Recall at some point that all of this is happening is a fixed compact convex set $K(L)$ as in \eqref{eq good convex}---more specifically that $x^\pm,y^\pm\in K(L)$. Let then $\kappa<0$ be an upper bound for the sectional curvature on $K(L)$. As we mentioned earlier, convexity of $K(L)$ and the upper bound on the curvature imply that $K(L)$ is a CAT($\kappa$)-space. It follows in particular that distances between points in the boundary of the triangles $(x^-,y^-,y^+)$ and $(x^-,y^+,x^+)$ are smaller than in the comparison triangle $(\bar x^-,\bar y^-,\bar y^+)$ and $(\bar x^-,\bar y^+,\bar x^+)$ in the space $\BH^2_{\kappa}$ of constant curvature $\kappa$. When we glue, as suggested by the notation, those two comparison triangles along the side $[\bar x^-,\bar y^+]$ we get a quadrilateral satisfying that 
\begin{align*}
d_{\CT}&(x^\pm,y^\pm)=d_{\BH^2_\kappa}(\bar x^\pm,\bar y^\pm)\text{ and}\\
d_{\CT}&(\alpha(0),m)\le d_{\BH^2_{\kappa}}(\text{mid}(\bar x^-,\bar x^+)),[\bar y^-,\bar y^+]).
\end{align*}
We thus have that
\begin{multline*}
h(\alpha(-\epsilon)+h(\alpha(\epsilon))-2h(\alpha(0))\ge\\
\ge d_{\BH^2_{\kappa}}(\bar x^-,\bar y^-)^2+d_{\BH^2_{\kappa}}(\bar x^+,\bar y^+)^2-2d_{\BH^2_{\kappa}}(\text{mid}(\bar x^-,\bar x^+),[\bar y^-,\bar y^+])^2
\end{multline*}
Noting now that, by construction and the CAT($\kappa$) property, the segments $[\bar x^-,\bar x^+]$ and $[\bar y^-,\bar y^+]$ are at least at distance $8\delta_0$ of each other, and that their distance is bounded from above by $\diam_\CT(K(L))$, we get that the right side of the last inequality is bounded from below by $C_K\cdot\epsilon^2$ for some $C_K>0$ which depends only on $K(L)$, and hence indirectly only on $K$. Putting things together, we get that 
$$h(\alpha(-\epsilon)+h(\alpha(\epsilon))-2h(\alpha(0))\ge C_K\cdot\epsilon^2\text{ for all }\epsilon\in(0,\delta_0).$$
Since $\alpha:(-\delta_0,\delta_0)\to\CT_{g,r}$ was an arbitrary geodesic parameterized by arc length with $\alpha(0)\in K$, we have proved what we had to prove. 
\end{proof}

Continuing with the properties of the function $h_\gamma$ given by \eqref{eq height function} note that that distance function $\dist_\gamma(\cdot)$ is Lipschitz with Lipschitz constant 1. It follows from Rademacher's theorem that $\dist_\gamma(\cdot)$ is almost everywhere differentiable with gradient of norm $1$. From here we get that the function $h_\gamma$ is almost everywhere differentiable with gradient of norm $2\cdot\dist_\gamma(\cdot)$, meaning that it has what we call {\em subexponential growth}:

\begin{defi*}
We will say that an almost everywhere differentiable function $f:M\to\BR$ on a connected Riemannian manifold $M$ {\em has subexponential growth} if for some, and hence any point $p_0\in M$ we have
$$\lim_{\tiny{\begin{array}{l}M\ni p\to \infty\\ df_p\text{ exists}\end{array}}}\Vert df_p\Vert\cdot e^{-\epsilon\cdot d_M(p,p_0)}=0$$
for all $\epsilon>0$.
\end{defi*}

Finally, note that $\CS_\gamma$ is $\Stab_{\Map_{g,r}}(\gamma)$-invariant, and that this implies that our function $h_\gamma$ is also $\Stab_{\Map_{g,r}}(\gamma)$-invariant. We collect all the properties we have established for $h_\gamma$ in the following statement:

\begin{prop}\label{prop 3}
Let $\gamma\subset S_{g,r}$ be a simple multicurve. The function $h_\gamma:\CT_{g,r}\to\BR_{>0}$ given by \eqref{eq height function} is $\Stab_{\Map_{g,r}}(\gamma)$-invariant, strictly convex and has subexponential growth.\qed
\end{prop}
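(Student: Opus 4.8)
The plan is to assemble the three asserted properties, each of which has essentially been prepared in the discussion preceding the statement; no new computation is needed. Strict convexity is precisely the content of Lemma \ref{lem 3}, so the only remaining tasks are to verify the $\Stab_{\Map_{g,r}}(\gamma)$-invariance and the subexponential growth of $h_\gamma$, both of which I expect to be formal.

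For the invariance, I would recall that $\Map_{g,r}$ acts on $(\CT_{g,r},g_{\WP})$ by isometries, so this action extends to an isometric action on the metric completion $\overline\CT_{g,r}$. By \eqref{eq set S} the stratum $\CS_\gamma$ is cut out by the condition $\ell_X(\gamma)=0$, and since $\ell_{\phi\cdot X}(\phi(\gamma))=\ell_X(\gamma)$ for every $\phi\in\Map_{g,r}$, the subset $\CS_\gamma$ is preserved by each $\phi\in\Stab_{\Map_{g,r}}(\gamma)$, for which $\phi(\gamma)=\gamma$. As $h_\gamma(X)=\min_{Y\in\CS_\gamma}d_{\overline\CT_{g,r}}(X,Y)^2$ is defined purely in terms of the completion metric and the set $\CS_\gamma$, both of which are $\Stab_{\Map_{g,r}}(\gamma)$-invariant, the function $h_\gamma$ is invariant as well.

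For the subexponential growth I would proceed exactly as indicated in the paragraph above the statement. The function $\dist_\gamma(\cdot)=\min_{Y\in\CS_\gamma}d_{\overline\CT_{g,r}}(\cdot,Y)$ is $1$-Lipschitz as a distance-to-a-set function, so by Rademacher's theorem it is differentiable almost everywhere, with gradient of norm $1$ wherever the differential exists (note $\CS_\gamma\subset\overline\CT_{g,r}\setminus\CT_{g,r}$, so $\dist_\gamma>0$ on all of $\CT_{g,r}$). Writing $h_\gamma=\dist_\gamma^2$ and applying the chain rule at such points shows that $h_\gamma$ is differentiable almost everywhere with $\Vert dh_\gamma\vert_X\Vert=2\,\dist_\gamma(X)$. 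Fixing a base point $X_0\in\CT_{g,r}$ and any $Y_0\in\CS_\gamma$, the triangle inequality gives $\dist_\gamma(X)\le d_{\overline\CT_{g,r}}(X,Y_0)\le d_{\WP}(X_0,X)+d_{\overline\CT_{g,r}}(X_0,Y_0)$, so $\Vert dh_\gamma\vert_X\Vert$ grows at most linearly in $d_{\WP}(X_0,X)$. Hence $\Vert dh_\gamma\vert_X\Vert\cdot e^{-\epsilon\, d_{\WP}(X_0,X)}\to 0$ as $X\to\infty$ for every $\epsilon>0$, which is exactly subexponential growth.

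The genuinely substantial step is the strict convexity, already isolated as Lemma \ref{lem 3}; its difficulty is that the square of the distance to a convex set need not be strictly convex (the distance to a line in $\BR^2$ being the standard caution), so one cannot merely quote the CAT(0) strict convexity of the squared distance to a \emph{point}. By contrast, the two remaining properties are soft: invariance is immediate from the isometric action, and subexponential growth follows from the Lipschitz bound together with the at-most-linear growth of $\dist_\gamma$ itself. I therefore expect no real obstacle in the assembly beyond what is already handled by Lemma \ref{lem 3}.
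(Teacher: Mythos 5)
Your proposal is correct and follows exactly the paper's own route: strict convexity is delegated to Lemma \ref{lem 3}, invariance is deduced from the fact that $\Stab_{\Map_{g,r}}(\gamma)$ acts by isometries preserving $\CS_\gamma$, and subexponential growth comes from the $1$-Lipschitz property of $\dist_\gamma$, Rademacher's theorem, and the at-most-linear bound $\Vert dh_\gamma\vert_X\Vert=2\dist_\gamma(X)$. Nothing to add.
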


Lemma \ref{lem 1} and Proposition \ref{prop 3}, together with the fact that $g_{\WP}$ is K\"ahler, encapsulate everything we will need about the Weil-Petersson metric in this paper. We refer however to \cite{Wolpert1,Wolpert2,Wolpert3} for more about the Weil-Petersson geometry of Teichm\"uller space.

\section{Proof of Theorem \ref{thm non-existence}}\label{sec irreducible}
We devote the first 3 parts of this section to recall a few facts on Riemannian geometry, more specifically on gradient flows of smooth convex functions and on the fact that holomorphic maps are critical points of the energy functional. These results are well-known, and readers familiar with  tools from complex geometry may choose to skim over the statements before seeing how these ingredients are combined to prove Theorem \ref{thm non-existence}. 
\medskip

Throughout this section, $N$ will be a Riemannian manifold, which we will eventually assume to be K\"ahler, and a fortiori complex. We denote the Riemannian metric by $\langle\cdot,\cdot\rangle=\langle\cdot,\cdot\rangle_M$ and the associated Levi-Civita connection by $\DD$. We stress that we are not assuming that our manifold is complete.

A comment on notation. By $X_p,\langle\cdot,\cdot\rangle_p,\omega_p$ and such we denote a tangent vector at $p\in M$, the scalar product or an alternating form on $T_pM$. When these pointwise objects are the restriction of a global object such as a vector field $X\in\Gamma(TM)$, and when we want to stress its evaluation at $p$, then we might write $X\vert_p$ instead of simply $X_p$.

\subsection{Convexity}
Recall that the {\em Hessian} at a point $p\in N$ of a smooth function $f\in C^\infty(N)$ is the symmetric bilinear form given by
\begin{align*}
Hf\vert_p&:T_pM\times T_pM\to\BR\\
Hf\vert_p&(X_p,Y_p)=\langle\DD_X\grad(f),Y\rangle\vert_p,
\end{align*}
where $\grad(f)$ is the gradient of $f$ and where $X,Y\in\Gamma(TM)$ are arbitrary smooth vector fields on $M$ with $X\vert_p=X_p$ and $Y\vert_p=Y_p$. The reason we are interested in the Hessian is that it detects convexity of smooth  functions. Indeed, noting that for every geodesic $\alpha:(-\epsilon,\epsilon)\to M$ we have
\begin{equation}\label{eq hessian}
\begin{split}
\frac{d^2}{dt^2}(f\circ\alpha)
&=\frac d{dt}\langle\grad(f),\alpha'(t)\rangle=\langle\DD_{\alpha'(t)}\grad(f),\alpha'\rangle+\langle\DD f,\DD_{\alpha'(t)}\alpha'(t)\rangle\\
&=\langle\DD_{\alpha'(t)}\grad(f),\alpha'\rangle=Hf\vert_{\gamma(t)}(\alpha'(t),\alpha'(t)),
\end{split}
\end{equation}
we get:

\begin{lem}\label{lem 4}
Let $M$ a Riemannian manifold. A smooth function $C^\infty(M)$ is strictly convex if and only if its Hessian $Hf\vert_p$ is positive definite at every $p\in M$. \qed
\end{lem}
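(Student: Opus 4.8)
The plan is to read the lemma off the Hessian identity \eqref{eq hessian} combined with the pointwise criterion for strict convexity recorded in the Remark following the Definition. First I would fix $p\in M$ and a unit vector $v\in T_pM$ and invoke the existence theorem for geodesics (via $\exp_p$) to produce an arc-length geodesic $\alpha$ with $\alpha(0)=p$ and $\alpha'(0)=v$; conversely, every such geodesic hands back a unit vector $\alpha'(0)$ at the point $\alpha(0)$. Evaluating \eqref{eq hessian} at $t=0$ then identifies $\frac{d^2}{dt^2}(f\circ\alpha)\vert_{t=0}$ with $Hf\vert_p(v,v)$, so that the second derivative of $f$ along unit-speed geodesics through $p$ is exactly the value of the quadratic form $Hf\vert_p$ on unit vectors. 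This dictionary is the whole content of the proof.

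With it in place, I would check both implications. For one direction, positive definiteness of $Hf\vert_q$ at every $q$ yields $\frac{d^2}{dt^2}(f\circ\alpha)\vert_{t=0}=Hf\vert_{\alpha(0)}(\alpha'(0),\alpha'(0))>0$ for every arc-length geodesic $\alpha$, which is precisely the condition in the Remark characterizing strict convexity of a smooth function. For the other, strict convexity forces this second derivative to be positive for every such $\alpha$, hence $Hf\vert_p(v,v)>0$ for all unit $v\in T_pM$; since $Hf\vert_p$ is symmetric bilinear and homogeneous of degree two in its argument, I would conclude $Hf\vert_p(w,w)>0$ for every nonzero $w$, i.e.\ positive definiteness at each $p$.

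The only nonformal ingredient is the equivalence asserted in the Remark itself, which links the uniform compact-set formulation of strict convexity in the Definition to the pointwise positivity of $\frac{d^2}{dt^2}(f\circ\alpha)\vert_{t=0}$; this is the step I would expect to be the main (if mild) obstacle, and the one I would spell out were the Remark not already available. The easy direction is to divide the defining inequality by $\epsilon^2$ and let $\epsilon\to 0$. For the converse I would, given a compact set $K$, use continuity of $q\mapsto Hf\vert_q$ together with compactness to extract $c>0$ with $Hf\vert_q(u,u)\ge 2c$ for all $q$ in a neighborhood of $K$ and all unit $u$, and then apply the integral form of Taylor's theorem to $g=f\circ\alpha$, obtaining $f(\alpha(\epsilon))+f(\alpha(-\epsilon))-2f(\alpha(0))=\int_0^\epsilon(\epsilon-s)\bigl(g''(s)+g''(-s)\bigr)\,ds\ge 2c\,\epsilon^2$ for $\epsilon$ small enough that $\alpha([-\epsilon,\epsilon])$ stays in that neighborhood, which is exactly the required uniform estimate with $C=2c$. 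Granting the Remark, however, the lemma drops out immediately from \eqref{eq hessian}.
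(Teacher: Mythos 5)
Your proposal is correct and follows exactly the paper's route: the lemma is read off the identity \eqref{eq hessian}, which identifies $\frac{d^2}{dt^2}(f\circ\alpha)$ with $Hf\vert_{\alpha(t)}(\alpha'(t),\alpha'(t))$, together with the pointwise second-derivative characterization of strict convexity from the Remark. Your extra paragraph supplying a proof of that Remark (Taylor with integral remainder plus compactness of the unit sphere bundle over a neighborhood of $K$) is a correct filling-in of a step the paper leaves implicit, not a different approach.
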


Another well-known fact is that the Hessian of a function controls the metric behavior of the flow associated to its negative gradient. We make this formal in the setting we will need it:

\begin{lem}\label{lem distortion flow}
Let $F:M\to N$ be a smooth map between Riemannian manifolds, let $f\in C^\infty(N)$ and $\rho\in C^\infty_c(M)$ be two smooth functions, the latter non-negative and with compact support, and for $t\in\BR$ set 
$$F_t:M\to N,\ F_t(x)=\phi_{\rho(x)\cdot t}(F(x))$$
where $(\phi_t)$ is the local flow of $-\grad f$. We have
$$\frac d{dt}\Vert dF_t\vert_xX_x\Vert^2\vert_{t=0}=-2\rho (Hf\vert_{F(x)})(dF_xX_x,dF_xX_x)-2d\rho(X_x)\cdot df_{F(x)}(dF_xX_x)$$
for every $x\in M$ and $X_x\in T_xM$.
\end{lem}

\begin{proof}
Before launching the proof, note that, while $\phi_t(p)$ might be only defined for $t\in(\epsilon_p,\epsilon_p)$ for some $\epsilon_p>0$ depending on $p\in N$, the fact that $\rho$ has compact support implies that there is some $\epsilon>0$ such that $F_t:M\to N$ is defined for all $t\in(-\epsilon,\epsilon)$. This means in particular that the claim in the lemma does actually make sense as stated. Now, once we have said this we can forget about the condition that $\rho$ is compactly supported and since the statement is purely local we can replace $M$ by a small neighborhood of the point $x$ we are interested in. In particular, we may assume $\rho>0$. Note also that if needed, we might replace the target by $N\times\BR^d$ for whatever $d$ we want, replacing then the function $f$ by the function $(p,z)\to f(p)$. It follows that we might assume without loss of generality that the dimension of $N$ is much larger than that of $M$. 

Now, since everything depends continuously on $\rho$ and $F$ when we perturb in the $C^\infty$-topology, we might suppose without loss of generality that $dF_x$ is injective and that $\grad f\vert_{F(x)}\notin dF_x(T_xM)$. This together with the fact that $\rho >0$ locally implies that, up to reducing $M$ to a neighborhood of $x$, we might assume that the map 
$$(-\epsilon,\epsilon)\times M\to N,\ (t,x)\mapsto\phi_t(F(x))$$ 
is an embedding. This has two important consequences:

\begin{enumerate}
    \item First, there is a function $\hat\rho\in C^\infty(N)$ with 
$$\hat\rho(\phi_t(F(x)))=\rho(x)\text{ for all }(t,x)\in(-\epsilon,\epsilon)\times M$$
and hence with 
$$\frac{\D}{\D t}F_t(x)=-\hat\rho(F_t(x))\cdot\grad(f)\vert_{F_t(x)}.$$
\item Second, whenever $X\in\Gamma(TM)$ is a vector field with $X\vert_x=X_x$ then there is a global vector field $Y\in\Gamma(TN)$ with 
$$Y\vert_{F_t(x)}=dF_t\vert_x(X\vert_x)\text{ for all }(t,x)\in(-\epsilon,\epsilon)\times M$$
Note that the Lie bracket $[Y,-\hat\rho\cdot\grad (f)]_{F_t(x)}=0$ vanishes for all $(t,x)\in(-\epsilon,\epsilon)\times M$, and hence that 
$$\DD_{\frac{\D}{\D t}F_t(x)}Y=\DD_Y\left(-\hat\rho\cdot\grad(f)\right)\vert_{F_t(x)},$$
again for all such $(t,x)$.
\end{enumerate}

All what is left is a calculation. Taking all derivatives at $t=0$ we have:
\begin{align*}
\frac d{dt}\Vert dF_t\vert_xX_x\Vert^2
&=\frac d{dt}\Vert Y\vert_{F_t(x)}\Vert^2=\left(\frac{\D}{\D t} F_t(x)\right)\langle Y,Y\rangle\\
&=2\cdot\langle\DD_{\frac{\D}{\D t} F_t(x)}Y,Y\rangle=2\cdot\langle\DD_Y\left(-\hat\rho\cdot\grad (f)\right),Y\rangle\\
&=-2\cdot\langle\hat\rho\cdot\DD_Y\grad f+(Y\hat\rho)\cdot\grad(f),Y\rangle\\
&=-2\cdot\hat\rho (Hf\vert_{F(x)})(Y_{F(x)},Y_{F(x)})-2\cdot d\hat\rho(Y_{F(x)})\cdot df_{F(x)}(Y_{F(x)}).
\end{align*}
Recalling that $Y_{F(x)}=dF_xX_x$ and that $\rho=\hat\rho\circ F$ we get that $d\hat\rho(Y_{F(x)})=d\rho(X_x)$, and the claim follows.
\end{proof}

\subsection{Energy and its first variation}
Recall that if $V$ and $W$ are Euclidean vector spaces, that is vector spaces endowed with a scalar product, then there is a unique scalar product on $\Hom(V,W)$ with the property that if $(v_1,\dots,v_n)$ and $(w_1,\dots,w_m)$ are orthonormal bases of $V$ and $W$, then $(v_i^*\otimes w_j)_{i,j}$ is an orthonormal basis of $\Hom(V,W)\simeq V^*\otimes W$ where $(v_1^*,\dots,v_n^*)$ is the dual basis to $(v_1,\dots,v_n)$. The {\em energy} of a linear map $L:V\to W$ is nothing other than the square of its norm 
$$e(L)=\Vert L\Vert^2_{\Hom(V,W)}=\sum_{i}\Vert L(v_i)\Vert^2$$
with respect to this scalar product. Before moving on, note also that the isomorphism $V^*\simeq V$ given by the scalar product induces an isomorphism $\Bil(V\times V,\BR)\simeq\Hom(V,V)$, meaning that we can interpret bi-linear forms as endomorphisms. This explains why we refer to the quantity $\tr(H)=\sum_i H(v_i,v_i)$ as the {\em trace} of $H\in\Bil(V\times V,\BR)$. 
\medskip

Moving on now to the world of manifolds, recall that the {\em total energy}, or just simply the {\em energy}, of a smooth map $F:M\to N$ between two Riemannian manifolds is the integral
$$E(F)=\int_M e(dF_p)d\vol_M(p)$$
of the pointwise energy $e(dF_p)$ of the differential $dF_p:T_pM\to T_{F(p)}N$. Here, the integral is taken with respect to the Riemannian measure on the domain $M$ of $F$. We will be interested in how the energy changes when we perturb a map $F:M\to N$ as in Lemma \ref{lem distortion flow}. The following follows directly from the said lemma:

\begin{lem}[First variation of energy]\label{lem first variation energy}
Let $F:M\to N$ be a smooth map between Riemannian manifolds, let $f\in C^\infty(N)$ and $\rho\in C^\infty_c(M)$ be two smooth functions, the latter non-negative and with compact support, and for $t\in\BR$ set 
$$F_t:M\to N,\ F_t(x)=\phi_{\rho(x)\cdot t}(F(x))$$
where $(\phi_t)$ is the local flow of $-\grad f$. We have
$$\frac d{dt}E(F_t)\vert_{t=0}=-2\int_M\tr\left(\rho\cdot F^*(Hf)+d\rho\otimes F^*(df)\right)_pd\vol_M(p).
\eqno\qed$$ 
\end{lem}

The statement of Lemma \ref{lem first variation energy} is rendered more complicated than it would be desirable by the presence of the compactly supported function $\rho$: if $M$ is compact, and we take $\rho\equiv 1$, then it simplifies to 
$$\frac d{dt}E(F_t)\vert_{t=0}=-2\int_N\tr(F^*(Hf))_pd\vol_M(p).$$
It thus follows that if $F$ is non-constant and $Hf$ is positive definite, that is if $f$ is strictly convex, then $\frac d{dt}E(F_t)\vert_{t=0}<0$, meaning that $F$ is not a critical point for the energy functional. Said differently, {\em if $M$ is compact and $F:M\to N$ is non-constant, then the energy decreases if we flow $F$ in the direction of the negative gradient of a smooth strictly convex function}. 

\subsection{Energy of holomorphic maps and the Wirtinger inequality}
A well known fact, key for us, is that holomorphic maps between K\"ahler manifolds are critical points of the energy functional. This follows basically from the {\em Wirtinger inequality} (see \cite{Eells-Sampson}):

\begin{named}{Wirtinger inequality}
Let $M$ and $N$ be K\"ahler manifolds and denote by $\omega_M$ and $\omega_N$ their K\"ahler forms. For any smooth finite energy map $F:M\to N$ we have
$$E(F)\ge\int_M (F^*\omega_N)\wedge\overbrace{\omega_M\wedge\dots\wedge\omega_M}^{\dim_\BC M-1}$$
with equality if and only if $F$ is holomorphic.
\end{named}

Note that in the case that the domain $M$ is a Riemann surface, the Wirtinger inequality simplifies to $E(F)\ge\int_M F^*\omega_N$, once again with equality if and only if $F$ is holomorphic.
\medskip

Anyways, the key observation now is that, since the K\"ahler forms $\omega_M$ and $\omega_N$ are closed, we get from Stokes' theorem that 
$$\int_M(F_t^*\omega_N)\wedge(\omega_M^{\dim_\BC M-1})=\int_M(F^*\omega_N)\wedge(\omega_M^{\dim_\BC M-1})$$ 
for any compactly supported deformation $F_t$ of $F$. This observation, together with the Wirtinger inequality, implies that holomorphic maps are critical points of the energy functional. Indeed, if $M$ and $N$ are K\"ahler and if $F_t:M\to N$ is a compactly supported perturbation of a holomorphic map $F:M\to N$ then, applying twice the Wirtinger inequality we have
$$E(F)=\int_M(F^*\omega_N)\wedge(\omega_M^{\dim_\BC M-1})=\int_M(F_t^*\omega_N)\wedge(\omega_M^{\dim_\BC M-1})\le E(F_t)$$
with equality if and only if $F_t$ is also holomorphic. We record this fact:

\begin{prop}\label{prop wirtinger applied}
Let $M$ and $N$ be K\"ahler manifolds with K\"ahler forms $\omega_M$ and $\omega_N$ and let $F:M\to N$ be holomorphic. Then we have
$$E(F_t)\ge E(F)$$
for any compactly supported perturbation $F_t$. Moreover, we have equality if and only if $F_t$ is also holomorphic.\qed
\end{prop}

We refer to \cite{Lawson} for more results along the lines discussed here and many beautiful applications. 

\subsection{Non-existence of certain holomorphic maps}
In this section we prove Theorem \ref{thm non-existence}, which we restate here for convenience of the reader:

\begin{named}{Theorem \ref{thm non-existence}}
Suppose that $N$ is a K\"ahler manifold whose metric is dominated by a multiple of the Kobayashi metric and which admits a strictly convex function with subexponential growth. Let also $M$ be either a closed connected K\"ahler manifold or an irreducible quasi-projective variety. Then there are no non-constant holomorphic maps $F:M\to N$.
\end{named}

Let us get a pesky issue directly out of the way: while every convex function on a manifold without boundary is continuous and even locally Lipschitz, such functions do not need to be smooth. However, in the course of the proof of the theorem, we might assume without loss of generality that the convex function $f:N\to\BR$ is actually smooth. Indeed, Greene and Wu \cite{Greene-Wu} proved that every strictly convex function on a non-necessarily complete manifold can be approximated by smooth strictly convex functions. They moreover prove that if the original function is $L$-Lipschitz then the approximating functions can, for any $\epsilon>0$, be taken to be $(L+\epsilon)$-Lipschitz. What their argument indeed shows is that if the original function is $L$-Lipschitz on some set, then the approximating function can be taken to be $(L+\epsilon)$-Lipschitz on that set. It follows that if the original strictly convex function has subexponential growth, then the approximating smooth strictly convex function can be chosen to also have subexponential growth. In other words, when proving Theorem \ref{thm non-existence} we might assume without loss of generality that the function $f$ is smooth. After this comment, we are now ready to prove the theorem.

\begin{proof}
Since this is the case we really care about, we will prove the claim in the case that $M$ is a quasi-projective variety. The argument for a closed K\"ahler manifold follows the same lines and is actually a bit simpler---we leave it to the reader.

Well, seeking a contradiction suppose that $F:M\to N$ is a non-constant holomorphic map where $M\subset\BC P^k$ is a quasi-projective variety of dimension $d$ and note that there is some copy $L\subset\BC P^k$ of $\BC P^{k-d+1}$ such that the restriction of $F$ to $L\cap M$ is not locally constant: every 1-dimensional complex subspace of $T_pM$ is the tangent space of such an intersection. The intersection $L\cap M$ might well be singular, but it is dominated by some Riemann surface of finite analytic type. It follows that it suffices to prove the claim in the case that the domain $M$ is a Riemann surface of finite analytic type. From now on, we work in this specific setting.

Although it is not strictly necessary for the argument, note also that we can remove a few points of $M$ to ensure that $F:M\to N$ has maximal rank everywhere and that $M$ has negative Euler characteristic. This latter property implies that $M$ admits a complete conformal hyperbolic metric, which is automatically K\"ahler because $M$ has complex dimension one. Note moreover that, since we are assuming that the metric of $N$ is dominated by the Kobayashi metric, we get that the holomorphic map $F:M\to N$ is Lipschitz.

After all these preparations, we come to the meat of the argument. Let $f:N\to\BR$ be our strictly convex function of subexponential growth. As we discussed just prior to the proof, we might assume without loss of generality that $f$ is smooth. Let then $(\phi_t)$ be the (local) flow associated to $-\grad f$, let $\rho\in C^\infty_c(M)$ be a compactly supported smooth function, and consider for small $t>0$ the compact deformation of $F$ given by
$$F_t:M\to N,\ F_t(p)=\phi_{\rho(p)\cdot t}(F(p))$$
We get from the Wirtinger inequality that $E(F_t)\ge E(F)$. Invoking Lemma \ref{lem first variation energy} we thus get a contradiction when we show that we can choose $\rho\in C^\infty_c(M)$ with
\begin{equation}\label{eq what we want}
\int_M\tr\left(\rho\cdot F^*(Hf)+d\rho\otimes F^*(df)\right)_pd\vol_M(p)>0.
\end{equation}
The remaining of the proof is devoted to construct such a function $\rho$.

To begin with, note that from the assumption that $f$ is smooth and strictly convex, we get that the Hessian $Hf\vert_p$ is positive definite at every $p\in M$. We in particular get that $\Tr(F^*(H_f))_x>0$ for every $x\in M$, and hence that the integral
\begin{equation}\label{eq big integral}
\int_M\rho\cdot\Tr(F^*(H_f))_pd\vol_M(p)>0
\end{equation}
is positive. We will not care whether the integral is finite or not.

Now recall that each cusp of $M$ has a standard neighborhood, which abusing terminology we call simply {\em a cusp}, isometric to the quotient 
$$U=\{z\in\BH^2\text{ with }\Im(z)\ge 1\}/(z\sim z+1)$$
where $\BH^2$ is the hyperbolic plane. Fix $\beta:[0,1]\to[0,1]$ a smooth bump function which sends a neighborhood of $0$ (resp.\ $1$) to $1$ (resp.\ $0$) and consider for $L\ge 0$ the function
$$B_L:U\to[0,1],\ B_L(z)=\left\{\begin{array}{ll}
1 & \text{ if }\Im(z)\le e^L\\
\beta(\frac 1L\log(\Im(z))-1) & \text{ if }\Im(z)\in[e^L,e^{2L}]\\
0 & \text{ otherwise}
\end{array}\right.$$
If $V\subset M$ is a cusp, that is the standard neighborhood of a cusp, then we will also denote by $B_L$ the function on $V$ obtained by composing the isometry $V\simeq U$ with the actual function $B_L$. With this notation, consider a smooth and compactly supported function
$$\rho_L:M\to[0,1],\ \rho_L(z)=\left\{\begin{array}{ll}
B_L(z) & \text{ if }z\text{ belong to a cusp}\\
1 & \text{ otherwise}
\end{array}\right.$$
The norm of $d\rho_L$ is bounded independently of the point and independently of $L$ and this implies that there is a constant $C$ with
$$\vert\Tr(d\rho_L\otimes F^*df)\vert_p\vert\le C\cdot\Vert F^*df\vert_p\Vert$$
for every $L$ and every point $p\in M$. Picking a base point $p_0\in M$ outside the cusps, note that $d\rho_L\vert_p=0$ for all $p\in M$ with $d_M(p,p_0)\le L$. Combining these two facts, and denoting by $B^M(p_0,L)$ the ball in $M$ of radius $L$, we get
$$\left\vert\int_M\Tr(d\rho_L\otimes F^*df)\vert_pd\vol_M(p)\right\vert\le C\cdot\int_{M\setminus B^M(p_0,L)}\Vert F^*df\vert_p\Vert\, d\vol_M(p)$$
for all $L$. Now, the assumption that $f$ has subexponential growth and the fact that $F$ is $1$-Lipschitz imply that $\Vert F^*df\vert_p\Vert$ grows subexponentially, meaning in particular that for every $\epsilon\in(0,1)$ there is some other constant $c$ with 
$$\Vert F^*df\vert_p\Vert\le c\cdot e^{(1-\epsilon)\cdot d_M(p_0,p)}.$$
Up to replacing $C$ by $C\cdot c$ we thus get for all $L$ that
\begin{align*}
\left\vert\int_M\Tr(d\rho_L\otimes F^*df)\vert_pd\vol_M(p)\right\vert
&\le C\cdot\int_{M\setminus B^M(p_0,L)}e^{(1-\epsilon)\cdot d_M(p_0,p)}\, d\vol_M(p)\\
&=C\cdot \int_L^\infty e^{(1-\epsilon)t}\ell_M(\D B^M(p_0,t))\ dt.
\end{align*}
Here $\ell_M(\D B^M(p_0,L))$ is the length (or if you so wish, the 1-dimensional Hausdorff measure) of the set of points in $M$ which are exactly at distance $L$ from $p_0$, and the integral is with respect to the standard Lebesgue measure on $\BR$. Noting that there is some $c>0$ with $\ell_M(\D B^M(p_0,t))\le c\cdot e^{-t}$ for all $t$. Replacing again $C$ by $C\cdot c$  we get that 
$$\left\vert\int_M\Tr(d\rho_L\otimes F^*df)\vert_p d\vol_M(p)\right\vert\le C\cdot \int_L^\infty e^{-\epsilon\cdot t} dt\to 0\text{ as }L\to\infty.$$
Since we also evidently have
$$\lim_L\int_M\rho_L\cdot\Tr(F^*Hf)\vert_pd\vol_M(p)\to \int_M\Tr(F^*(H_f))_pd\vol_M(p)\text{ as }L\to\infty,$$
we get from \eqref{eq big integral} that 
$$\int_M\tr\left(\rho_L\cdot F^*(Hf)+d\rho_L\otimes F^*(df)\right)_pd\vol_M(p)>0$$
for all large enough $L$. It follows that for any such $L$, the function $\rho=\rho_L$ satisfies \eqref{eq what we want}. This concludes the proof of Theorem \ref{thm non-existence}.
\end{proof}

Note at this point that, as long as the target $N$ is a `good' orbifold then the statement of Theorem \ref{thm non-existence} also holds true in the category of orbifolds, where an orbifold is {\em good} if it has a manifold as a finite cover. Let us explain why. First note, as in the proof, that it suffices to consider the case that $M$ is a Riemann surface of finite analytic type. Now, the assumption that there is a manifold $N'$ and a finite orbifold cover $\pi:N'\to N$ implies that any map $F:M\to N$ lifts to a map $F':M'\to N'$ from a finite cover $M'$ of $M$. Evidently, if $N$ satisfies that conditions in Theorem \ref{thm non-existence}, and if we lift the structure of $N$ to $N'$, then so does $N'$. Moreover, if $F$ is holomorphic then so is $F'$ and if $f:N\to\BR$ is strictly convex and has subexponential growth, then the function $f\circ\pi:N'\to\BR$ has those same properties. The theorem, as stated, implies thus that the lifted holomorphic map $F'$ is constant, from where we get that also $F$ is constant, as we wanted to prove.

To conclude, let us stress that in the proof of Theorem \ref{thm non-existence} we did not use completeness of $N$.

\section{Main results}\label{sec main}
The main goal of this section is to prove Theorem \ref{main} and Theorem \ref{thm irreducible} from the introduction.

\subsection{Irreducibility}
Recall that we are thinking of the moduli space as the complex orbifold
$$\CM_{g,r}=\CT_{g,r}/\Map_{g,r}$$
and that maps $F:M\to\CM_{g,r}$ are always in the category of orbifolds: they are induced by maps $\tilde F:\tilde M\to\CT_{g,r}$ on the universal cover of $M$ which are equivariant under some homomorphism $F_*:\pi_1(M)\to \Map_{g,r}$. 

\begin{bem}
    Although we are not making it explicit with our notation, if $M$ is a good orbifold instead of a manifold, we denote by $\pi_1(M)$ the fundamental group in the orbifold category. 
\end{bem}

Theorem \ref{thm irreducible} asserts that, under suitable conditions on $M$, the homomorphism $F_*:\pi_1(M)\to \Map_{g,r}$ induced by a non-constant holomorphic map $F:M\to\CM_{g,r}$ is {\em irreducible} in the sense that its image $F_*(\pi_1(M))$ is not contained in the stabilizer of any simple multicurve $\gamma$ in the surface $S_{g,r}$ of genus $g$ and with $r$ punctures. We restate the theorem for convenience of the reader:

\begin{named}{Theorem \ref{thm irreducible}}
If $M$ is an irreducible quasi-projective variety and $F:M\to\CM_{g,r}$ is a non-constant holomorphic map, then the homomorphism $F_*:\pi_1(M)\to \Map_{g,r}$ is irreducible.
\end{named}

\begin{proof}
Suppose that the orbifold $M$ is a quasi-projective variety with universal cover $\tilde M$ and fundamental group $\pi_1(M)$ and that $F:M\to\CM_{g,r}$ is a non-constant holomorphic map. Seeking a contradiction, suppose that the image of $F_*(\pi_1(M))\subset \Map_{g,r}$ fixes a multicurve $\gamma$. This implies that $F_*(\pi_1(M))\subset\Stab_{\Map_{g,r}}(\gamma)$ and hence that $F$ lifts to a map 
$$F':M\to\CT_{g,r}/\Stab_{\Map_{g,r}}(\gamma).$$
We endow the target with the Weil-Petersson metric, which we recall is K\"ahler and dominated by a multiple of the Kobayashi metric. As we pointed out after the proof of Theorem \ref{thm non-existence}, the theorem holds true in the category of orbifolds, as long as the target is a good orbifold, that is an orbifold which is finitely covered by a manifold. The orbifold $\CT_{g,r}/\Stab_{\Map_{g,r}}(\gamma)$ is good because $\Map_{g,r}$ has a finite index torsion free subgroup. 

It follows from all of this that to be able to get a contradiction from Theorem \ref{thm non-existence} we just need to exhibit a strictly convex function with subexponential growth on $\CT_{g,r}/\Stab_{\Map_{g,r}}(\gamma)$. Luckily for us, we get from Proposition \ref{prop 3} that the function
$$h_\gamma:\CT_{g,r}\to\BR_{\ge 0}$$
defined in \eqref{eq height function} is strictly convex, has subexponential growth and is invariant under $\Stab_{\Map_{g,r}}(\gamma)$. It thus descends to a strictly convex function 
$$\hat h_\gamma:\CT_{g,r}/\Map_{g,r}(\gamma)\to\BR_{>0}$$
with sub-exponential growth. Having found our function, we just got a contradiction to the assumption that $F_*(\pi_1(M))$ fixes $\gamma$. We are done.
\end{proof}

\begin{bem}
Note that the same statement and proof apply if $M$ is a general, but closed, K\"ahler manifold. Remark also that the reader that feels uneasy with using the fact that moduli space is a quasi-projective variety could avoid it as follows. In the proof of Theorem \ref{thm non-existence} we only used that the domain was quasi-projective to reduce to the case that it was a Riemann surface. We did that using that in quasi-projective varieties there are plenty of algebraic curves. In the specific setting of moduli space, one can instead use the fact that there are plenty of Teichm\"uller curves.
\end{bem}

\subsection{Homomorphisms between mapping class groups}

Our next and last goal is to prove Theorem \ref{main}, but first we must recall a few things about homomorphisms between mapping class groups. In general, group homomorphisms $\Map_{g,r} \to \Map_{g',r'}$ can be quite diverse. Some well-known maps are induced by covers, inclusions, and forgetting punctures. Let us give some examples:

\begin{bei}[Automorphisms]\label{example aut}
Ivanov proved that all automorphisms of the mapping class group $\Map_{g,r}$ are of the form $[\psi]\mapsto[\Psi\circ\psi\circ\Psi^{-1}]$ where $\Psi$ is a homeomorphism of $S_{g,r}$. Here and in the sequel $[\cdot]$ stands for the class of. Anyways, if $\Psi$ is orientation preserving, then the map 
$$\CT_{g,r}\to\CT_{g,r},\ [\phi:S_{g,r}\to X]\mapsto[\phi\circ\Psi^{-1}:S_{g,r}\to X]$$
is equivariant under this homomorphism and hence descends to the map $\CM_{g,r}\to\CM_{g,r}$. This map is however not very interesting: it is just the identity. When $\Psi$ is orientation reversing, things are more intriguing. In this case one also has an equivariant map, namely
$$\CT_{g,r}\to\CT_{g,r},\ [\phi:S_{g,r}\to X]\mapsto[\phi\circ\Psi^{-1}:S_{g,r}\to\bar X]$$
where $\bar X$ is the Riemann surface with the complex conjugated structure. The induced map $\CM_{g,r}\to\CM_{g,r}$ depends on the individual element $\Psi$, but in all cases we have that $\Psi$ is not holomorphic, but rather anti-holomorphic.
\end{bei}

\begin{bei}[Forgetful maps]\label{example forgetful}
Think of $S_{g,r}$ as being $S_{g,0}$ with $r$ marked points and, accordingly, think of $\Homeo_+(S_{g,r})$ as a subgroup of $\Homeo(S_{g,0})$. For some $r'\le r$, let $S_{g,r'}$ be obtained from $S_{g,r}$ by forgetting $r-r'$ marked points. The inclusion $\Homeo_+(S_{g,r})\subset\Homeo_+(S_{g,r'})$ induces a homomorphism $\Map_{g,r}\to\Map_{g,r'}$. It is namely the homomorphism associated to the forgetful map $\CM_{g,r}\to\CM_{g,r'}$, which we recall is holomorphic. 
\end{bei}

\begin{bei}[Constructions via covers]\label{example covers} For some $g\ge 2$, let $*\in S_{g, 0}$ be a base point and let $\Gamma\subset\pi_1(S_{g,0},*)$ the subgroup consisting of elements which are trivial in the $\BZ/2\BZ$-homology. The subgroup $\Gamma$ is the fundamental group of a closed surface, and indeed it follows from the Riemann-Hurwitz formula that we have $\Gamma\simeq \pi_1(S_{(g-1)\cdot 2^{2g}-1,0},*)$. Noting that $\Aut(\pi_1(S_{g,0},*))$ preserves $\Gamma$ we get thus a homomorphism 
$$\Aut(\pi_1(S_{g,0},*))\to\Aut(\Gamma)\simeq \Aut(\pi_1(S_{(g-1)\cdot 2^{2g}-1,0}),*)).$$ 
Since $\Map_{g,1}\simeq\Aut(\pi_1(S_{g,0},*))$ we get a homomorphism
$$\Map_{g,1}\to\Map_{(g-1)\cdot 2^{2g}-1,1}$$
By construction, this homomorphism is associated to a holomorphic map $\CM_{g,1}\to\CM_{(g-1)\cdot 2^{2g}-1,1}$. Indeed, there is an equivariant map $\CT_{g,1}\to\CT_{(g-1)\cdot 2^{2g}-1,1}$ given by lifting each complex structure on $S_{g,1}$ to a complex structure on $S_{(g-1)\cdot 2^{2g}-1,2^{2g}}$ and then forgetting all but one of the marked points. 

In the absence of marked points it is a bit harder to use covers to get homomorphisms between mapping class groups, but it was shown in \cite{paper with chris} that for some suitable $g'$ there is a cover $\pi:S_{g',0}\to S_{g,0}$ which induces a homomorphism $\Map_{g,0} \to \Map_{g',0}$. This homomorphism is once again induced by the holomorphic map $\CM_{g,0}\to\CM_{g',0}$ given lifting to $S_{g',0}$ via $\pi$ complex structures on $S_{g,0}$.
\end{bei}

\begin{bei}[Multi-embeddings]\label{example multi-embedding}
Thinking now of $S_{g,r}$ as an open surface, that is as a surface with cusps instead of marked points, suppose that for some $(g',r')$ we have an (automatically finite) collection $\CI=\{\iota_i: S_{g,r} \to S_{g',r'}\}$ of embeddings with disjoint image. Every multi-embedding induces a (diagonal) homomorphism 
$$\CI_*:\Homeo_c(S_{g,r})\to\Homeo_c(S_{g',r'})$$
between the groups of compactly supported homeomorphisms of the domain and the target.
If the homomorphism $\CI_*$ induces a homomorphism $\CI_*:\Map_{g,r}\to\Map_{g',r'}$, then we say that the latter is {\em induced by the multi-embedding} $\CI$. 

It is important to keep in mind that not every multi-embedding induces a homomorphism between mapping class groups. Indeed, \cite[Lemma 2.19]{Rodrigo} asserts that a multi-embedding $\CI$ as above induces a homomorphism between mapping class groups if and only if for every $i$ and every oriented curve $\gamma\subset S_{g,r}$ which can be homotoped into a cusp, one of the following holds:
\begin{itemize}
\item[c1)] Either $\iota_i(\gamma)$ is homotopically trivial,
\item[c2)] or $\iota_i(\gamma)$ can be homotoped into a cusp of $S_{g',r'}$, 
\item[c3)] or there is another $i'\neq i$ with $\iota_{i'}(\gamma)$ homotopic to $\iota_i(\gamma)$ such that the two embeddings $\iota_i$ and $\iota_{i'}$ pullback opposite orientations.
\end{itemize}
For example, let $\Sigma$ be a compact surface with interior homeomorphic to $S_{g,1}$ and identify $S_{2g,0}$ with the double of $\Sigma$. The two tautological embeddings of $S_{g,1}=\Sigma\setminus\D\Sigma$ into the double yield a multi-embedding $\CI$ of $S_{g,1}$ satisfying the conditions above and hence inducing a homomorphism $\CI_*:\Map_{g,1}\to\Map_{2g,0}$. Noting that this homomorphism is not irreducible, we get from Theorem \ref{thm irreducible} that $\CI_*$ is not induced by any holomorphic (or anti-holomorphic) map $\CM_{g,1}\to\CM_{2g,0}$.
\end{bei}

As the reader might have noticed, there is some redundancy in the above list of examples: both automorphisms and homomorphisms induced by forgetful maps are also induced by multi-embeddings. Indeed, a homomorphism $\CI_*$ induced by a multi-embedding $\CI$ is the composition of an automorphism and a forgetful map if and only if the collection $\CI$ is just an embedding, in the sense that it has a single element. 

Anyways, the reason we stress homomorphisms induced by multi-embeddings is that in \cite{Rodrigo} it is proved that in the range of Theorem \ref{main}, all non-trivial homomorphisms between mapping class groups are induced by some multi-embedding:

\begin{sat}\cite[Theorem 1.2]{Rodrigo}\label{thm rodrigo}
    Let $g\geq 4$ and $g'\leq 3\cdot 2^{g-3}$. Every non-trivial homomorphism $\varphi:\Map_{g,r} \to \Map_{g',r'}$ is induced by a multi-embedding.
\end{sat}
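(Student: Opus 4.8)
The plan is to recover a topological multi-embedding from the purely algebraic data of $\varphi$, the essential intermediary being the images of Dehn twists. Recall that $\Map_{g,r}$ is generated by Dehn twists $T_c$ along a finite system of simple closed curves (say a Humphries-type system of nonseparating curves), and that as an abstract group it is the quotient of the free group on these generators by the disjointness (commuting), braid, chain and lantern relations. Consequently $\varphi$ is determined by the tuple $(\varphi(T_c))_c$, and the whole problem reduces to showing that these images are multitwists supported on curves which assemble into the image of a disjoint collection of embeddings $\iota_i\colon S_{g,r}\to S_{g',r'}$.

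The first step I would carry out is \emph{rigidity of twists}: every $\varphi(T_c)$ is a multitwist, i.e.\ a product of powers of Dehn twists along a multicurve $\delta_c\subset S_{g',r'}$. Since all nonseparating twists are conjugate, it suffices to treat one $c$ and then transport the conclusion. The tools are standard but delicate: $T_c$ lies in large free abelian subgroups (twists along a pants decomposition containing $c$, of rank $3g-3+r$), and its centralizer is exactly the stabilizer of $c$, so applying $\varphi$ forces $\varphi(T_c)$ to commute with the images of a whole multicurve's worth of twists. Feeding this into the Thurston canonical form of $\varphi(T_c)$ rules out pseudo-Anosov pieces and pins the element down to a multitwist, while the lantern relation controls the twisting coefficients (this is the mechanism by which, in the linear range, one shows that twists go to single twists). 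I stress that this step alone does \emph{not} use the bound on $g'$ and does \emph{not} by itself produce a multi-embedding: the covers of Example \ref{example covers} also send twists to multitwists.

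The second step is \emph{reconstruction}. The assignment $c\mapsto\delta_c$ respects all the source relations: disjoint curves produce disjoint (or equal) target multicurves, curves meeting once produce configurations realizing the braid relation, and so on. This packages into a simplicial-type map from the curve system of $S_{g,r}$ to multicurves of $S_{g',r'}$ that is locally modelled on an embedding of a neighborhood of the source configuration. The crux is then a \emph{dichotomy}: such a map either \emph{unfolds} into finitely many disjoint embedded copies of $S_{g,r}$, giving a multi-embedding $\CI=\{\iota_i\}$ whose induced homomorphism agrees with $\varphi$ (conditions c1)--c3) of Example \ref{example multi-embedding} being exactly the compatibility needed around the cusps), or it \emph{wraps}, behaving like the monodromy of a covering as in Example \ref{example covers}.

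The bound $g'\le 3\cdot 2^{g-3}$ enters precisely to exclude the wrapping case, and I expect this to be the main obstacle. One must show that any genuinely covering-type (non-multi-embedding) homomorphism forces the target genus above the threshold; concretely, the minimal-complexity covers that can carry such a homomorphism have genus growing exponentially in $g$ (the $\BZ/2$-homology cover of Example \ref{example covers} already has genus $(g-1)2^{2g}-1$), and the constant $3\cdot 2^{g-3}$ is calibrated to sit below the smallest genus achievable by any essential wrapping. Making this quantitative---ruling out \emph{every} covering-type configuration below the threshold, not just the obvious ones, and verifying the constant is sharp---is the delicate heart of the argument; once it is in place only the unfolded case survives, and $\varphi$ is induced by a multi-embedding.
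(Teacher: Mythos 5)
First, a point of order: the paper does not prove this statement at all --- it is imported wholesale as \cite[Theorem 1.2]{Rodrigo}, so there is no in-paper proof to compare your attempt against; the actual argument lives in the cited preprint. Judged on its own terms, your proposal correctly identifies the standard two-stage architecture for such rigidity theorems (first show images of Dehn twists are multitwists, then reconstruct a topological map from the induced correspondence on curves), but both decisive steps are left as declared goals rather than carried out, and at least one of your structural guesses is off. Concretely: the step ``feeding this into the Thurston canonical form of $\varphi(T_c)$ rules out pseudo-Anosov pieces and pins the element down to a multitwist'' is precisely where all the difficulty is concentrated, and it is also where the hypothesis $g'\le 3\cdot 2^{g-3}$ must actually be used --- commuting with the images of a rank-$(3g-3+r)$ abelian group of twists gives you an abelian subgroup of $\Map_{g',r'}$, but bounding what its generators can look like requires quantitative input relating $g$ and $g'$, and you do not supply it. Asserting that the lantern relation then ``controls the twisting coefficients'' is likewise a placeholder for an argument, not an argument.

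Second, your account of where the genus bound enters is almost certainly misplaced. You propose a dichotomy in which the curve correspondence either ``unfolds'' into a multi-embedding or ``wraps'' like a covering, with $3\cdot 2^{g-3}$ calibrated to exclude the wrapping case. But the covering constructions of Example \ref{example covers} land in genus on the order of $(g-1)\cdot 2^{2g}$, vastly above $3\cdot 2^{g-3}$, so no sharp constant is needed to rule them out; moreover nothing you say establishes that ``unfold or wrap'' exhausts the possibilities --- a priori the images of twists could be roots of multitwists, could have partial pseudo-Anosov support, or the curve correspondence could fail to be locally injective in ways that are neither embeddings nor coverings. The exponential threshold in \cite{Rodrigo} is an algebraic constraint arising in the multitwist/reconstruction analysis, not a census of minimal covering genera. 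As it stands your text is a reasonable research plan, but the ``delicate heart'' you yourself flag --- proving the multitwist rigidity and the exhaustiveness of the dichotomy under the stated bound --- is exactly the content of the theorem, and it is missing.
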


\begin{bem}
We wish to stress that the result proved in \cite{Rodrigo} is quite more general, allowing for surfaces which not only have cusps but also boundary components.
\end{bem}

As we mentioned earlier, some homomorphisms induced by multi-embeddings are reducible and hence, by Theorem \ref{thm irreducible}, don't come from any holomorphic map between moduli spaces. The following proposition characterizes the irreducible homomorphisms $\varphi:\Map_{g,r}\to\Map_{g',r'}$ in the range we are interested in:

\begin{prop}\label{prop homomorfismos irred}
A homomorphism $\varphi: \Map_{g,r} \to \Map_{g',r'}$ induced by a multi-embedding is irreducible if and only if it is the composition of an automorphism and of a homomorphism induced by a forgetful map.
\end{prop}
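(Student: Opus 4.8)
The plan is to prove the equivalent statement that $\varphi$ is irreducible if and only if the inducing multi-embedding $\CI=\{\iota_i\}$ consists of a single embedding. This suffices because, as recalled just before Theorem \ref{thm rodrigo}, a homomorphism induced by a multi-embedding is the composition of an automorphism and a forgetful map precisely when $\CI$ has a single element. Throughout I write $\Sigma_i\subset S_{g',r'}$ for a compact core of the image $\iota_i(S_{g,r})$, so that $\Sigma_i$ is a compact subsurface of genus $g$, and I use that the diagonal homomorphism $\CI_*$ sends each mapping class to a homeomorphism supported in $\bigcup_i\iota_i(S_{g,r})$. In particular every element of $\varphi(\Map_{g,r})$ fixes, componentwise, the frontier multicurve $\partial(\bigcup_i\Sigma_i)$ and fixes every curve contained in the complement $S_{g',r'}\setminus\bigcup_i\Sigma_i$.

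For the first direction, suppose $\CI=\{\iota\}$ is a single embedding. By the cited fact $\varphi$ is then $\alpha\circ F$, where $F:\Map_{g,r}\to\Map_{g,r'}$ is a forgetful homomorphism and $\alpha$ an automorphism. The Birman exact sequence shows that $F$ is surjective, and since automorphisms are bijective we conclude that $\varphi(\Map_{g,r})$ is all of $\Map_{g',r'}$ (so in particular $g'=g$). As $g\ge 4$, the full pure mapping class group fixes no essential multicurve---a Dehn twist about a curve transverse to a given $\gamma$ already displaces $\gamma$---so $\varphi$ is irreducible.

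For the converse I argue the contrapositive: if $\CI$ has at least two elements then $\varphi$ is reducible. By the observation above it is enough to produce an essential component $\delta\subset\partial(\bigcup_i\Sigma_i)$ of the frontier multicurve, since such a $\delta$ is fixed by the whole image. Here is the crux. The subsurfaces $\Sigma_1$ and $\Sigma_2$ are disjoint and each has genus $g\ge 1$; writing $R=S_{g',r'}\setminus\mathrm{int}(\Sigma_1)$ we have $\Sigma_2\subset R$, so $R$ carries positive genus. If every component of $\partial\Sigma_1$ were inessential, each such curve would bound, on the $R$-side (it cannot bound a disk inside the positive-genus piece $\Sigma_1$), a disk or once-punctured disk in $R$; capping $R$ along these curves by an innermost-disk argument would realize any positive-genus component of $R$ as a closed subsurface of the connected surface $S_{g',r'}$, forcing it to be all of $S_{g',r'}$ and hence forcing $\bigcup_i\Sigma_i$ to sit inside a union of disks---impossible, since $\Sigma_2$ has genus. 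Thus $\partial\Sigma_1$ has an essential component, which is the desired $\delta$, and $\varphi$ is reducible.

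The main obstacle is precisely this last topological lemma: converting the intuition that ``a frontier separating two positive-genus pieces must be essential'' into a clean argument. I expect to handle it with the standard innermost-disk normalization for a disjoint family of curves each bounding a disk with at most one marked point, together with additivity of genus under cutting along a multicurve (which also gives $g'\ge ng\ge 2g$ when $n\ge 2$, though the proposition itself needs no bound on $g'$). Everything else---surjectivity of forgetful maps, invariance of the frontier under $\varphi(\Map_{g,r})$, and the absence of a globally fixed multicurve for the full mapping class group---is routine.
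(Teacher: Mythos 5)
Your proof is correct and follows essentially the same route as the paper's: both reduce the statement to showing that a multi-embedding with at least two components yields a reducible homomorphism, and both then exhibit an essential curve fixed by the whole image, namely the image under one embedding of a cusp-parallel curve of $S_{g,r}$ (your frontier component of $\Sigma_i$ is exactly such a curve up to isotopy). The only real difference is that where the paper delegates the existence of an essential such curve to \cite[Lemma 2.19]{Rodrigo}, you supply a direct genus/innermost-disk argument for it, which is a harmless (and self-contained) substitute.
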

\begin{proof}
If $\varphi$ is the composition of automorphisms and  forgetful maps, then it is easy to see that $\varphi$ is irreducible. We show the other direction. 

Let $\CI$ be a multi-embedding inducing $\varphi$. Observe that $\varphi$ is the composition of automorphisms and forgetful maps if and only if $\CI$ contains a single embedding. Also, note that if $S$ is closed, then every multi-embedding contains a single homeomorphism. Thus, to prove the statement we show $\varphi$ is reducible if $S$ is a punctured surface and $\CI$  contains at least two embeddings.

If $\CI$ contains two embeddings, then $g'>g$. Even more, by \cite[Lemma 2.19]{Rodrigo} there is a curve $\gamma \subset S_{g,r}$ homotopic to a cusp and an embedding $\iota\in\CI$ such that $\iota(\gamma)\subset S_{g',r'}$ is a non-trivial curve non-homotopic to a cusp. Since every element of $\Map_{g,r}$ fixes the homotopy class of $\gamma$ and $\varphi$ is induced by a multi-embedding, then the image of $\varphi$ fixes the homotopy class of $\iota(\gamma)$. In other words, $\varphi$ is reducible. 
\end{proof}

\subsection{The resolution}
We come now to the final act of this paper, the proof of Theorem \ref{main}:

\begin{named}{Theorem \ref{main}}
Suppose that $g\ge 4$ and that $g'\le 3\cdot 2^{g-3}$, and let $r,r'\ge 0$. Then every non-constant holomorphic map $F:\CM_{g,r}\to\CM_{g',r'}$ is a forgetful map. In particular, if such a map exists, then $g'=g$ and $r'\le r$.
\end{named}

\begin{proof}
Let $F_*: \Map_{g,r} \to \Map_{g',r'}$ be the homomorphism corresponding to a non-constant holomorphic map $F: \CM_{g,r} \to \CM_{g',r'}$. From Theorem \ref{thm irreducible} we get that $F_*(\Map_{g,r})$ does not fix any non-trivial simple multi-curve. It thus follows from Theorem \ref{thm rodrigo} and Proposition \ref{prop homomorfismos irred} that $F_*$ is induced by the composition of an automorphism $$\Map_{g,r}\to\Map_{g,r}, [\psi]\mapsto[\Psi\circ\psi\circ\Psi^{-1}]$$ 
as in Example \ref{example aut} and of a forgetful homomorphism 
$$\Map_{g,r}\to\Map_{g',r'}$$
as in Example \ref{example forgetful}. In particular, $g=g'$ and $r'\leq r$. Moreover, since the composition of an anti-holomorphic map and of a holomorphic map is anti-holomorphic we get that the homeomorphism $\Psi\in\Homeo(S_{g,r})$ inducing our automorphism has to be orientation preserving. From here, it follows that $F_*:\Map_{g,r}\to\Map_{g,r'}$ after conjugacy agrees with the homomorphism $\hat{F}_*$ induced by the forgetful map $\hat{F}:\CM(S_{g,r})\to \CM(S_{g,r'})$. Now, since Teichm\"uller space is a classifying space for proper actions of the mapping class group \cite{Ji Wolpert} we get that $F$ and $\hat F$ are homotopic. To conclude the proof, it suffices to invoke a result from \cite[Proposition 3.2]{griego}: any two non-constant holomorphic and homotopic maps from a quasi-projective variety to the moduli space agree. We are done.
\end{proof}


\begin{thebibliography}{99}

\bibitem{Ahlfors}
L. Ahlfors, {\em Some remarks on Teichm\"uller's space of Riemann surfaces}, Ann. of Math. 74, 1961.

\bibitem{griego}
S. Antonakoudis, J. Aramayona, and J. Souto, {\em Holomorphic maps between moduli spaces}, Ann. Inst. Fourier 68, 2018.

\bibitem{paper with chris}
J. Aramayona, C. Leininger, and J. Souto, {\em Injections of mapping class groups}, Geom. Topol. 13, 2009.

\bibitem{Javi-Juan}
J. Aramayona and J. Souto, {\em Homomorphisms between mapping class groups}, Geom. Topol. 16, 2012.

\bibitem{Ballman}
W. Ballmann, {\em Lectures on K\"ahler manifolds}, ESI Lect. Math. Phys., European Mathematical Society, 2006.

\bibitem{Bridson-Haffliger}
M. Bridson and A. Haefliger, {\em Metric spaces of non-positive curvature}, Grundlehren Math. Wiss. 319, 
Springer-Verlag, 1999.

\bibitem{Buser}
P. Buser, {\em Geometry and spectra of compact Riemann surfaces}, Progr. Math. 106, Birkh\"auser, 1992.

\bibitem{Deligne-Mumford}
P. Deligne and D. Mumford, {\em The irreducibility of the space of curves of given genus}, Publ. Math., Inst. Hautes \'Etud. Sci. 36, 1969.

\bibitem{Rodrigo}
R. De Pool, {\em Homomorphisms between pure mapping class groups}, preprint in arXiv: 2410.18796[math].

\bibitem{Eells-Sampson}
J. Eells and J. Sampson, {\em Harmonic mappings of Riemannian manifolds}, Amer. J. Math. 86, 1964.

\bibitem{Farb-Margalit} 
B. Farb and D. Margalit, {\em A primer on mapping class groups}, Princeton University Press, 2012,

\bibitem{Greene-Wu}
R. Greene and H. Wu, {\em $C^\infty$ convex functions and manifolds of positive curvature}, Acta Math. 137, 1976.

\bibitem{Imayoshi-Taniguchi}
Y. Imayoshi and M. Taniguchi, {\em An introduction to Teichm\"uller spaces}, Springer-Verlag, 1992.

\bibitem{Ji Wolpert}
L. Ji and S. A. Wolpert, {\em A cofinite universal space for proper actions for mapping
class groups}, in {\em In the tradition of Ahlfors-Bers.} V, Contemporary Mathematics,
vol. 510, American Mathematical Society, p. 151-163, 2010.

\bibitem{Kobayashi}
S. Kobayashi, (1998), {\em Hyperbolic Complex Spaces}, Grundlehren Math. Wiss. 318, Springer, 1998.

\bibitem{Lawson}
B. Lawson, {\em Lectures on minimal submanifolds. Vol. I}, Math. Lecture Ser. 9, Publish or Perish, 1980.


\bibitem{McMullen}
C. McMullen, {\em From dynamics on surfaces to rational points on curves}, Bull. Amer. Math. Soc. 37, 2000.

\bibitem{McMullen Kahler}
C. McMullen, {\em The moduli space of Riemann surfaces is K\"ahler hyperbolic}, Ann. of Math. (2) 151, 2000.

\bibitem{Nag}
S. Nag, {\em The complex analytic theory of Teichm\"uller spaces}, Canad. Math. Soc. Ser. Monogr. Adv. Texts, Wiley-Intersci. Publ., 1988.

\bibitem{Royden}
H. Royden, {\em Report on the Teichm\"uller metric}, Proc Natl Acad Sci 65, 1970. 

\bibitem{Thurston}
W. Thurston, {\em The Geometry and Topology of Three-Manifolds}. Princeton University lecture notes (1978---1981).

\bibitem{Tromba WP negative}
A. Tromba, {\em On a natural algebraic affine connection on the space of almost complex structures and the curvature of Teichm\"uller space with respect to its Weil-Petersson metric}, Manuscripta Math. 56 (1986).

\bibitem{Weil}
A. Weil, {\em Modules des surfaces de Riemann}, S\'eminaire Bourbaki, Expos\'e 168, 1958.

\bibitem{Wolpert incomplete}
S. Wolpert, {\em Noncompleteness of the Weil-Petersson metric for Teichm\"uller space}, Pacific J. Math. 61, 1975.

\bibitem{Wolpert negative}
S. Wolpert, {\em Chern forms and the Riemann tensor for the moduli space of curves}, Invent. Math. 85, 1986.

\bibitem{Wolpert geodesically convex}
S. Wolpert, {\em Geodesic length functions and the Nielsen problem}, J. Differential Geom. 25, 1987.

\bibitem{Wolpert1}
S. Wolpert, {\em Geometry of the Weil-Petersson completion of Teichm\"uller space}, Surveys in differential geometry VIII,
International Press, 2003.

\bibitem{Wolpert2}
S. Wolpert, {\em Weil-Petersson perspectives}, in {\em Problems on mapping class groups and related topics}, Proc. Sympos. Pure Math., 74, AMS, 2006.

\bibitem{Wolpert3}
S. Wolpert, {\em The Weil-Petersson metric geometry}, in {\em Handbook of Teichm\"uller theory II}, 
IRMA Lect. Math. Theor. Phys. 13, European Mathematical Society, 2009.

\bibitem{Yamada}
S. Yamada, {\em On the geometry of Weil-Petersson completion of Teichm\"uller spaces}, Math. Res. Lett. 11, 2004.
\end{thebibliography}
\end{document}